\newcommand{\epi}{\epsilon}
\newcommand{\Dv}{{\rm div}}
\theoremstyle{plain}
\newtheorem{theorem}{Theorem}[section]
\newtheorem{definition}{Definition}[section]
\newtheorem{lemma}{Lemma}[section]
\newtheorem{remark}{Remark}[section]
\newtheorem{proposition}{Proposition}[section]
\newif \ifLastSection \LastSectionfalse
\numberwithin{equation}{section}
\newcommand{\T}{{\mathbb T}}
\newcommand{\R}{{\mathbb R}}
\newcommand{\eps}{\epsilon}
\newcommand{\beq}{\begin{equation}}
\newcommand{\eeq}{\end{equation}}
\newcommand{\bp}{\begin{proof}}
\newcommand{\ep}{\end{proof}}
\begin{document}

\title{{\bf {Existence of global weak solutions for   the   Navier-Stokes-Vlasov-Boltzmann equations}}}

\author{
\begin{tabular}{cc}
&
 L{\sc ei} Y{\sc ao}$^{1}$,\ \ \
C{\sc heng} Y{\sc u$^{2}$ }\ \
\\
&
  {\small\it 1.
       School of Mathematics and Center for Nonlinear Studies,}\\
&
  {\small\it
     Northwest University, Xi'an 710127,  P.R. China }\\
      &{\small\it
        E-mail: yaolei1056@hotmail.com}\\
 &
  {\small\it 2.
      Department of Mathematics, }\\
&  {\small\it
      The University of Texas at Austin, TX 78712, USA}\\
        &
        {\small\it yucheng@math.utexas.edu}
\end{tabular}
}

\date{}

\maketitle

\textbf{{\bf Abstract:}}
A moderately thick spray can be described by  a coupled system  of equations consisting of the incompressible Navier-Stokes equations and the Vlasov-Boltzmann equation.
We investigate this kind of mathematical model in this paper. In particular, we study the initial value problem for  the Navier-Stokes-Vlasov-Boltzmann equations.
The existence of global weak solutions
is established by a weak convergence method.
 The interesting point of our main result is to handle the model with some breakup effects while the velocity of particles is in the whole space.

\bigbreak \textbf{{\bf Key Words}:} The moderately thick spray,  breakup Kernel, Navier-Stokes-Vlasov-Boltzmann equations, Existence, Weak solution.

\bigbreak  {\textbf{AMS Subject Classification :} (2010)  } 35Q30, 35Q99.

\section{Introduction}

A spray is a process in which drops are dispersed in a gas. Sprays can help people to distribute material over a cross-section and to generate liquid surface area.
 There are various applications in different fields, the examples include but are not  limited to  fuel injectors for gasoline and Diesel engines, atomizers for jet engines, atomizers for injecting heavy fuel oil into combustion air in steam boiler injectors, and rocket engine injectors.
 It also has a big impact on crop yields, plant health, efficiency of pest control and of course, profitability. Here we will focus on a model of a moderately thick spray. In general, we refer the reader to \cite{Caflisch} for the physical background.

For a moderately thick spray, we can assume that the volume fraction occupied by the droplets is small enough to be
neglected. Thus, we are able to apply the Vlasov-Boltzmann equation to model the liquid phase, which can be performed by the use of a particle density function. In particular, a function
$f(t,x,\xi)$ denotes a number density of droplets of which at time $t$ and physical position $x$ with velocity $\xi$, which is a solution of the following Vlasov-Boltzmann equation
\begin{equation}
\label{vlasov equation}
f_t+\xi\cdot\nabla_x f+\nabla(f\mathfrak{F})=Q(f,f),\end{equation}
where $\mathfrak{F}$ is an acceleration resulting from the drag force exerted by the gas,
and $Q(f,f)$ is an operator taking into account complex phenomena happening at the level of the droplets
(collisions, coalescences, breakup).

The incompressible Navier-Stokes equations could be used to describe the motion of gas
when instigating  the transport of sprays in the upper airways of the human lungs:
\begin{equation}
\begin{split}&
\label{NS equation}
u_t+u\cdot\nabla u+\nabla P-\mu\Delta u=F_e,
\\&\Dv u=0
\end{split}
\end{equation}
where $u$ is the velocity of the  gas, and $F_e$ denotes the external force with \begin{equation}
\label{external force}
F_e=-c\int_{\R^3}f\mathfrak{F}\,d\xi,
\end{equation}
$\mathfrak{F}$ denotes the acceleration, it is given by
\begin{equation*}
\label{accelation}
\mathfrak{F}=-\frac{9\mu}{2\rho}\frac{\xi-u}{r^2},
\end{equation*} $\mu$ is the viscosity constant of the incompressible Navier-Stokes equations, $\rho$ is the density of gas, $r$ is the radius of the droplet. We can assume that
$\frac{9\mu}{2\rho r^2}=1$ in the whole paper, thus
\begin{equation}
\label{accelation-}
\mathfrak{F}=-(\xi-u).
\end{equation}

\noindent One of the typical forms of the collision kernel is given by
\begin{equation}
\label{operator of collision}
Q(f,f)=-\lambda f(t,x,\xi)+\lambda\int_{\R^3} T(\xi,\xi')f(t,x,\xi')\,d\xi'.
\end{equation}
The constant $\lambda>0$ is  the breakup frequency.
 The kernel $T(\xi,\xi^\prime)$ is the probability of a change with respect to velocity from $\xi^\prime$ to $\xi$, and $\xi$ is the velocity of individuals before the collision while $\xi^\prime$ is the velocity immediately after the collision.
  Given that a reorientation occurs, the probability function $T(\xi,\xi^\prime)$ is a non-negative function and after normalization we have
\begin{equation}\label{E1.4}
\int_{\mathbb{R}^3}T(\xi,\xi^\prime)d\xi=1.
\end{equation}

By the acceleration of the Vlasov-Boltzmann equations and the external force of the incompressible Navier-Stokes equations, the above equations \eqref{vlasov equation}-\eqref{operator of collision} can be coupled with each other. As a result, we arrive at
a  model for a moderately thick spray, namely the Navier-Stokes-Vlasov-Boltzmann equations:
\begin{equation}\label{E1.1}
\left\{\begin{array}{l}
\partial_t f
  +
  \xi\cdot \nabla_x f+{\rm div}_\xi((u-\xi)f)=-\lambda f+\lambda\int_{\mathbb{R}^3}T(\xi,\xi^\prime)f(t,x,\xi^\prime)d\xi^\prime,\\[2mm]
\partial_t u
 +u\cdot \nabla_x u +\nabla_x P-\mu\Delta_x u=-\int_{\mathbb{R}^3}(u-\xi)fd\xi,   \\[2mm]
 {\rm div}u=0,
 \end{array}
        \right.
\end{equation}
with $$\int_{\R^3} T(\xi,\xi')\,d\xi=1.$$
The main goal of this paper is to investigate the existence of weak solutions globally in time $t$ for the equations \eqref{E1.1} with the following initial data
\begin{equation}\label{E1.2}
(f,u)|_{t=0}=(f_0(x,\xi),u_0(x)), \ \ \  x\in \mathbb{T}^3,\ \xi\in \mathbb{R}^3,
\end{equation}
and the initial data $f_0(x,\xi)$ is suitable decay condition as $|\xi|\to\infty.$
\bigbreak

The mathematical analysis for spray models is very challengingly because the coupled term for unknowns that does not depend on the same set of variables.
  The existence theory of global weak solutions for such models,  dates back to the late of 1990's. The first work in this field was  \cite{Hamdache} where the author proved the global  existence of weak solution and their large-time behavior for the Vlasov-Stokes equations. The existence theorem for weak solutions was extended in \cite{Ano, Boudin, Mellet1, Yu2, Yu1}, where the authors did not neglect the convection term and considered the Navier-Stokes equations, including incompressible and compressible ones. In \cite{Goudon1}, the existence and uniqueness of global smooth solutions near an equilibrium was proved under smallness conditions for the Navier-Stokes system coupled with the Vlasov-Fokker-Planck equation in 3D. In the meantime,  there has been  a lot of work  in hydrodynamic limits; we refer the reader to \cite{Goudon2, Goudon3, Mellet2}. In these works, the authors rely on convergence methods, such as the  compactness and relative entropy methods, to investigate hydrodynamic limits. It is natural to consider the mathematical analysis of  models with
  effects of collision or the effects of breakup, are but more challenging.
   In \cite{Legar}, Legar-Vasseur established the existence theory for  weak solutions for a system involving the coupling of  the Navier-Stokes equations and the Vlasov-Boltzmann equation where they restricted the velocity $\xi$ to  a bounded domain.  Benjelloun-Desvillettes-Moussa in \cite{Benjelloun}
   introduced  the Navier-Stokes-Vlasov-Boltzman model for spray theory, and a typical operator is given when the droplets after breakup have the same velocities as before breakup.
   Finally, they
   established an existence result of weak solutions for a simple model which was derived from the Navier-Stokes-Vlasov-Boltzman model. In particular, they
assumed that the aerosol is bidispersed, in other words,  there are only two possible radius $r_1 > r_2$ exist for the droplets, and the result
for  the breakup of particles of radii $r_1$ are particles of radius $r_2$. Under this assumption, the density function will be split up as follows $$ f(t, x, \xi, r) = f_1(t, x, \xi)\delta_{r=r_1} + f_2(t, x, \xi)\delta_{r=r_2},$$
therefore, the  Navier-Stokes-Vlasov-Boltzman model will reduce to the Navier-Stokes-Vlasov equations with damping terms. Motivated by the work of Yu \cite{Yu}, of particular interests in this paper is to establish the existence of weak solutions of
 Navier-Stokes-Vlasov-Boltzman equations with a typical breakup operator given in \cite{Benjelloun}, for the velocity $\xi\in \R^3.$ However, the restriction of the same velocities is not necessary in our paper.
  \\

It is not necessary to assume that the droplets after breakup have the same velocities as before breakup in this paper. In fact,  we deduce the following  from the conservation of kinetic energy,
\begin{equation*}
|\xi|^2=|\xi^\prime|^2,
\end{equation*}
which implies
\begin{equation}\label{E1.3}
|\xi|=|\xi^\prime|.
\end{equation}
 We will assume \eqref{E1.3} in the whole paper. In other words, different from the work of \cite{Benjelloun}, we only need to assume that the droplets after breakup have the same speeds as before breakup, not the same velocities.

\noindent As \cite{Legar}, we assume that $T(\xi,\xi^\prime)$ satisfies a self-similarity property, namely,
\begin{equation}\label{E1.5}
T(\xi,\xi^\prime)=H(|\xi^\prime|)T(\frac{\xi}{|\xi^\prime|},\frac{\xi^\prime}{|\xi^\prime|}), \textrm{\ for\  some\  function}\  H(\cdot).
\end{equation}

\noindent Note that, smooth solutions of the   problem \eqref{E1.1}-\eqref{E1.2} satisfy an  energy equality. In particular, we have the energy inequality
\begin{equation}\label{E1.6}
\begin{split}
&\frac{1}{2}\int_{\mathbb{T}^3}|u|^2dx+\int_{\mathbb{T}^3}\int_{\mathbb{R}^3}f(1+\frac{1}{2}|\xi|^2)d\xi dx +\int_0^T \int_{\mathbb{T}^3}|\nabla u|^2dx dt+\int_0^T \int_{\mathbb{T}^3}\int_{\mathbb{R}^3}f|u-\xi|^2d\xi dx  dt\\
=&\frac{1}{2}\int_{\mathbb{T}^3}|u_0|^2dx+\int_{\mathbb{T}^3}\int_{\mathbb{R}^3}f_0(1+\frac{1}{2}|\xi|^2)d\xi dx ,
\end{split}
\end{equation}
for any $T>0$.   Thus, it is natural to suppose that the initial data satisfy
\begin{equation}\label{E1.7}
\int_{\mathbb{T}^3}|u_0|^2dx+\int_{\mathbb{T}^3}\int_{\mathbb{R}^3}f_0(1+\frac{1}{2}|\xi|^2)d\xi dx <\infty.
\end{equation}

\bigbreak

Based on the energy equality \eqref{E1.6}, we define the concept of weak solution for the   problem \eqref{E1.1}-\eqref{E1.2} as follows:\\
\begin{definition}\label{Def1.1}
A pair $(u,f)$ is called a  global weak solution to the problem \eqref{E1.1}-\eqref{E1.2} if, for any $T>0$, the following properties hold:
\begin{itemize}
\item $u\in L^\infty(0,T; L^2(\mathbb{T}^3))\cap L^2(0,T; H^1(\mathbb{T}^3))$;
\item $f(t,x,\xi)\geq 0$, for any $(t,x,\xi)\in(0,T)\times\mathbb{T}^3\times\mathbb{R}^3$;
\item $f\in L^\infty(0,T; L^\infty(\mathbb{T}^3\times\mathbb{R}^3)\cap L^1(\mathbb{T}^3\times\mathbb{R}^3) )$;
\item $|\xi|^3f \in L^\infty(0,T; L^1(\mathbb{T}^3\times\mathbb{R}^3))$;
\item for any test function $\varphi\in C^\infty([0,T]\times \mathbb{T}^3)$ with ${\rm div}\varphi=0$, we have
\begin{equation}\label{E1.8}
\begin{split}&
\int_0^T\int_{\mathbb{T}^3}(-u\cdot\varphi_t+(u\cdot \nabla) u\cdot\varphi+\nabla u: \nabla \varphi)dxd
\\&\quad\quad\quad\quad=-\int_0^T\int_{\mathbb{T}^3}\int_{\mathbb{R}^3}f(u-\xi)\cdot \varphi d\xi dx  dt+\int_{\mathbb{T}^3}u_0\cdot\varphi(0,x)dx;
\end{split}
\end{equation}
\item for any test function $\phi\in C^\infty([0,T]\times\mathbb{T}^3\times\mathbb{R}^3)$, we have
\begin{equation}\label{E1.9}
\begin{split}
&-\int_0^T\int_{\mathbb{T}^3}\int_{\mathbb{R}^3}f(\phi_t+\xi\cdot \nabla_x \phi +(u-\xi)\cdot \nabla_{\xi}\phi)d\xi dx dt=\int_{\mathbb{T}^3}\int_{\mathbb{R}^3}f_0\phi(0,x,\xi)d\xi dx\\
&+\lambda\int_0^T\int_{\mathbb{T}^3}\int_{\mathbb{R}^3}f\phi d\xi dx  dt
-\lambda\int_0^T\int_{\mathbb{T}^3}\int_{\mathbb{R}^3}\int_{\mathbb{R}^3}T(\xi,\xi^\prime)f(t,x,\xi^\prime)\phi d\xi^\prime d\xi  dx dt;
\end{split}
\end{equation}
\item the energy inequality
\begin{equation*}
\begin{split}
&\frac{1}{2}\int_{\mathbb{T}^3}|u|^2dx+\int_{\mathbb{T}^3}\int_{\mathbb{R}^3}f(1+\frac{1}{2}|\xi|^2)d\xi dx+\int_0^T \int_{\mathbb{T}^3}|\nabla_x u|^2dx dt\\
&+\int_0^T \int_{\mathbb{T}^3}\int_{\mathbb{R}^3}f|u-\xi|^2d\xi dx  dt\\
\leq &\frac{1}{2}\int_{\mathbb{T}^3}|u_0|^2dx+\int_{\mathbb{T}^3}\int_{\mathbb{R}^3}f_0(1+\frac{1}{2}|\xi|^2)d\xi dx,
\end{split}
\end{equation*}
for any $t\in [0,T].$
\end{itemize}
\end{definition}

Our main results in this paper is as follows:
\begin{theorem}\label{Thm 1}
If the initial data satisfy $\Dv u_0=0$, and \eqref{E1.7}, the probability function $T(\xi,\xi^\prime)$ satisfies \eqref{E1.4} and \eqref{E1.5},
then there exists a global weak solution to the problem \eqref{E1.1}-\eqref{E1.2}.
\end{theorem}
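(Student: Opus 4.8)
## Proof Proposal

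The plan is to construct approximate solutions by a regularization and fixed-point scheme, derive uniform estimates from the energy inequality \eqref{E1.6} together with moment estimates, and then pass to the limit using compactness arguments. I would begin by regularizing the Vlasov-Boltzmann equation: introduce a cut-off of the velocity variable $\xi$ on a large ball $B_R$, mollify the coupling term, and add (if needed) a small artificial viscosity $\eps\Delta_\xi f$ to the kinetic equation to gain compactness in $\xi$. For fixed $u$ in a suitable class, the linear transport-diffusion equation for $f$ is solvable, and the Navier-Stokes system \eqref{NS equation} with the drag force $-\int_{\R^3}(u-\xi)f\,d\xi$ as a source is solvable by the classical Leray scheme (Galerkin in $x$). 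A Schauder or Schaefer fixed-point argument on the map $u \mapsto f \mapsto u$, carried out on a short time interval and then extended by the a priori bounds, yields approximate solutions $(f_n,u_n)$ satisfying the regularized analogues of \eqref{E1.8}--\eqref{E1.9}.

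Next I would establish the uniform bounds. The energy estimate \eqref{E1.6} gives $u_n$ bounded in $L^\infty(0,T;L^2(\T^3))\cap L^2(0,T;H^1(\T^3))$, $f_n(1+\tfrac12|\xi|^2)$ bounded in $L^\infty(0,T;L^1(\T^3\times\R^3))$, and the drag dissipation $\int_0^T\!\!\int\!\!\int f_n|u_n-\xi|^2\,d\xi\,dx\,dt$ bounded. Crucially, the collision operator on the right of \eqref{E1.1} must be shown not to destroy these bounds: since $\int_{\R^3}T(\xi,\xi')\,d\xi=1$ it conserves mass $\int\!\!\int f\,d\xi\,dx$, and by the self-similarity assumption \eqref{E1.5} combined with the speed-preservation \eqref{E1.3}, it conserves the kinetic energy moment $\int\!\!\int |\xi|^2 f\,d\xi\,dx$ — this is exactly why \eqref{E1.6} survives the breakup term. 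To control $f_n$ in $L^\infty$ I would use the maximum-principle structure of the transport equation together with $\|T\|$-type bounds on the gain term; to propagate the third moment $|\xi|^3 f_n\in L^\infty(0,T;L^1)$ I would test \eqref{E1.1}$_1$ against $|\xi|^3$, using \eqref{E1.3} once more so that the collision term contributes nothing to the $|\xi|^3$ moment, and absorb the $u\cdot\nabla_\xi$ contribution via Young's inequality against lower moments and the energy bound on $u$.

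With these uniform bounds in hand, I would extract weakly-$*$ convergent subsequences: $u_n\rightharpoonup u$ in $L^2(0,T;H^1)$ and weakly-$*$ in $L^\infty(0,T;L^2)$, and $f_n\rightharpoonup f$ weakly-$*$ in the relevant $L^\infty$ and weighted-$L^1$ spaces. Strong compactness of $u_n$ in $L^2((0,T)\times\T^3)$ follows from Aubin--Lions, using an estimate on $\pa_t u_n$ in a negative Sobolev space obtained from the momentum equation (the drag term is controlled in $L^1$ or better by interpolating the moment bounds). For $f_n$, velocity averaging lemmas give strong convergence of the moments $\rho_n=\int f_n\,d\xi$ and $j_n=\int \xi f_n\,d\xi$ in $L^p$, which is what is actually needed to identify the limit of the coupling terms $\int f_n(u_n-\xi)\,d\xi$ and $\int\!\!\int f_n(u_n-\xi)\cdot\varphi$. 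The main obstacle, and where the whole paper differs from \cite{Legar}, is that $\xi$ ranges over all of $\R^3$: the cut-off $B_R$ must be removed, so the tails $\int_{|\xi|>R} f_n(1+|\xi|)\,d\xi$ must be shown uniformly small, and this is precisely what the third-moment bound $|\xi|^3 f_n\in L^\infty(0,T;L^1)$ buys (giving tightness with a rate $R^{-1}$ or $R^{-2}$ in the linear and quadratic moments). Finally I would pass to the limit in the collision term $\lambda\int\!\!\int\!\!\int T(\xi,\xi')f_n(t,x,\xi')\phi\,d\xi'\,d\xi\,dx$, which is linear in $f_n$ and hence only requires weak convergence of $f_n$ once the $\xi'$-integral is shown to define a bounded operator on the moment spaces (again using \eqref{E1.4}, \eqref{E1.5}), and verify the energy inequality in the limit by weak lower semicontinuity. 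The regularization parameters $\eps\to 0$, $R\to\infty$ are then sent to their limits in this order, completing the construction of the weak solution in the sense of Definition \ref{Def1.1}.
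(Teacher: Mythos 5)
Your proposal is correct in outline and shares the paper's overall skeleton (approximate, derive energy and moment bounds, pass to the limit by weak convergence), and you correctly identify the two structural facts that make the breakup term harmless: conservation of the $(1+\tfrac12|\xi|^2)$-moment via Fubini together with \eqref{E1.3} and \eqref{E1.4}, and the bound $\int_{\R^3}T(\xi,\xi^\prime)\,d\xi^\prime\le K$ (the paper's Lemma \ref{Lem2.2}) for the gain term. Where you genuinely diverge is in the construction of the approximate kinetic solutions. You truncate $\xi$ to a ball $B_R$ and add an artificial viscosity $\eps\Delta_\xi f$, then remove both at the end; the paper never introduces a velocity cut-off. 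Instead, for a given Galerkin velocity $\tilde u\in Y_m$ it solves the kinetic equation by a monotone iteration \eqref{E3.4-1} in which only the gain term is lagged to step $n-1$, each iterate being given explicitly along characteristics by \eqref{E3.4}; nonnegativity and monotonicity of $\{f_m^n\}$ in $n$ then yield the Gronwall-type $L^\infty$, $L^1$ and moment bounds uniformly in $n$, and the limit $n\to\infty$ produces $f_m$ with $\xi$ ranging over all of $\R^3$ from the start. This matters because keeping the velocity variable unbounded throughout is precisely the point the paper advertises over \cite{Legar}; your route reintroduces the cut-off whose removal (tail control via the third moment) then becomes an extra step, though your tightness argument for it is sound. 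Two smaller differences: the paper's only $\eps$ is a mollification of the initial data, not a viscosity; and the velocity-averaging lemma you invoke is not actually needed --- since the coupling is linear in $f$, weak-$*$ convergence of the moments $\int_{\R^3} f_m\,d\xi$ and $\int_{\R^3}\xi f_m\,d\xi$ combined with the strong $L^2$ convergence of $u_m$ from Aubin--Lions already identifies the limit of $\int_{\R^3} f_m(u_m-\xi)\,d\xi$, and the quadratic drag-dissipation term in the energy inequality is handled in the paper by expanding the square and applying Fatou and monotone convergence rather than an abstract lower-semicontinuity claim.
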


\begin{remark}\label{Rem 1.1}
The same result as in Theorem \ref{Thm 1} holds for the initial boundary value problem \eqref{E1.1}-\eqref{E1.2}, with the boundary condition
$u=0$ on $\partial\Omega$ and $f(t,x,\xi)=f(t,x,\xi^*)$ for $x\in \partial\Omega$, $\xi\cdot n(x)<0$, where $\xi^*=\xi-2(\xi\cdot n(x))n(x)$ is the specular
velocity, $n(x)$ is the outward normal to $\Omega,$ and $\Omega\subset\mathbb{R}^3$ is a bounded domain.
Meanwhile,
the incompressible Navier-Stokes equations in the system \eqref{E1.1} can be replaced by the inhomogeneous ones; the extension of our result, in this
context, is considered in the forthcoming paper \cite{Yao}.
\end{remark}

 An interesting aspect of our main result is that it  handles the model with some breakup effects while the velocity of particles is in the whole space. As we mentioned before, we need to assume that the droplets after breakup have the same speeds as before breakup, but not the same velocities. This is different from the work of  \cite{Benjelloun}.
A key observation of our proof is Lemma \ref{Lem2.2}, which gives us some uniform control on the density function $f(t,x,\xi)$. By  Fubini's Theorem, we are able to show the following {\it a priori} estimate
\begin{equation}
\label{operator 0}
\int_{\mathbb{T}^3}\int_{\R^{3}}|\xi|^p Q(f) \,d\xi\; dx=0\;\text{ for any } p\geq1,
\end{equation}
which allows us to obtain further bounds and compactness of smooth solutions.  Thus, with a suitable approximation, a weak solution could be recovered. Our idea of the approximation is to construct an iteration for the kinetic part, and to adopt
a  Galerkin  method for the fluid part. However, the breakup operator in the  iteration is given by
\begin{equation}
\label{operator iteration}
-\lambda f^n+\lambda\int_{\mathbb{R}^3}T(\xi,\xi^\prime)f^{n-1}(t,x,\xi^\prime)d\xi^\prime,
\end{equation}
for any integer $n\geq 1.$
Note that, $\{f^n\}$ is an increasing sequence. This allows us to obtain the bounds on $f^n$ and $\int|\xi|^kf ^n\,d\xi$, which yields   weak stability. By the weak convergence method, the existence of weak solutions can be  deduced.

{\bf Notations}: In the following, $C$ from line to line denote a constant  depending   on the initial data, $T$ and the physical coefficients; $C(E,B)$    denotes  a generic   positive constant  depending   on the initial data, $T$, the physical coefficients and $E$, $B$.

We organize the rest of the paper as follows. In Section 2, we deduce a priori estimates, state some useful lemmas. In Section 3, we construct a smooth solution of an approximation scheme for  \eqref{E1.1}-\eqref{E1.2}. In Section 4, we recover   weak solutions from the approximations  by a  weak convergence method.

\bigskip

\section{{\it A  priori} estimates and some useful lemmas}\label{S2}

In this section, we derive a {\it priori} estimates for   the problem \eqref{E1.1}-\eqref{E1.2}, which will help us to derive the weak stability of the solutions. Firstly, we derive an  energy inequality for any smooth solution  of \eqref{E1.1}-\eqref{E1.2}.

\begin{lemma}\label{Lem 2.1}
For any smooth solution $(u,f)$ to the  problem \eqref{E1.1}-\eqref{E1.2},  the following equality holds
\begin{equation}
\begin{split}
\label{energy equality}
&\frac{1}{2}\int_{\mathbb{T}^3}|u|^2dx+\int_{\mathbb{T}^3}\int_{\mathbb{R}^3}f(1+\frac{1}{2}|\xi|^2)d\xi dx+\int_0^T \int_{\mathbb{T}^3}|\nabla u|^2dx dt+\int_0^T\int_{\mathbb{T}^3}\int_{\mathbb{R}^3}f|u-\xi|^2d\xi dx dt\\
=& \frac{1}{2}\int_{\mathbb{T}^3}|u_0|^2dx+\int_{\mathbb{T}^3}\int_{\mathbb{R}^3}f_0(1+\frac{1}{2}|\xi|^2)d\xi dx.
\end{split}
\end{equation}
\end{lemma}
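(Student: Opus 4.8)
The plan is to multiply each equation of the system \eqref{E1.1} by the appropriate multiplier, integrate over $\mathbb{T}^3\times\mathbb{R}^3$ (or $\mathbb{T}^3$) and over $[0,T]$, and then sum the resulting identities so that the coupling terms cancel. First, I would multiply the Navier–Stokes momentum equation by $u$ and integrate over $\mathbb{T}^3$. Using $\mathrm{div}\,u=0$, the convection term $\int_{\mathbb{T}^3}(u\cdot\nabla_x u)\cdot u\,dx$ vanishes (integration by parts on the torus, no boundary), the pressure term $\int_{\mathbb{T}^3}\nabla_x P\cdot u\,dx$ vanishes likewise, and $-\mu\int_{\mathbb{T}^3}\Delta_x u\cdot u\,dx=\mu\int_{\mathbb{T}^3}|\nabla_x u|^2\,dx$. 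This yields
\[
\frac{d}{dt}\,\frac12\int_{\mathbb{T}^3}|u|^2\,dx+\mu\int_{\mathbb{T}^3}|\nabla_x u|^2\,dx=-\int_{\mathbb{T}^3}\int_{\mathbb{R}^3}(u-\xi)f\,d\xi\cdot u\,dx.
\]

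Next I would test the Vlasov–Boltzmann equation against the weight $\bigl(1+\tfrac12|\xi|^2\bigr)$ and integrate over $\mathbb{T}^3\times\mathbb{R}^3$. The transport term $\int\xi\cdot\nabla_x f\,(1+\tfrac12|\xi|^2)\,d\xi\,dx$ vanishes since the weight is $x$-independent and we integrate the $x$-divergence over the torus. For the $\xi$-divergence term, integrating by parts in $\xi$ moves $\nabla_\xi$ onto the weight: $-\int\mathrm{div}_\xi\bigl((u-\xi)f\bigr)\bigl(1+\tfrac12|\xi|^2\bigr)\,d\xi\,dx=\int f(u-\xi)\cdot\xi\,d\xi\,dx$ (boundary terms at $|\xi|=\infty$ vanish by the assumed decay of $f$, which is exactly what the $|\xi|^3 f\in L^1$ control in Definition \ref{Def1.1} and Lemma \ref{Lem2.2} are designed to guarantee — this is where the decay hypotheses enter). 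For the collision operator, I use the key cancellation \eqref{operator 0}: writing $Q(f)=-\lambda f+\lambda\int_{\mathbb{R}^3}T(\xi,\xi')f(t,x,\xi')\,d\xi'$, we have $\int_{\mathbb{T}^3}\int_{\mathbb{R}^3}Q(f)\,d\xi\,dx=0$ by \eqref{E1.4}, and $\int_{\mathbb{T}^3}\int_{\mathbb{R}^3}|\xi|^2 Q(f)\,d\xi\,dx=0$ because $|\xi|=|\xi'|$ on the support of $T$ (the speed-conservation assumption \eqref{E1.3}) combined with Fubini and \eqref{E1.4}; more precisely $\int|\xi|^2\int T(\xi,\xi')f(t,x,\xi')\,d\xi'\,d\xi=\int f(t,x,\xi')\bigl(\int|\xi|^2 T(\xi,\xi')\,d\xi\bigr)d\xi'=\int f(t,x,\xi')|\xi'|^2\,d\xi'$. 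Hence the whole collision contribution to the $\bigl(1+\tfrac12|\xi|^2\bigr)$-moment is zero. This gives
\[
\frac{d}{dt}\int_{\mathbb{T}^3}\int_{\mathbb{R}^3}f\Bigl(1+\tfrac12|\xi|^2\Bigr)\,d\xi\,dx=\int_{\mathbb{T}^3}\int_{\mathbb{R}^3}f(u-\xi)\cdot\xi\,d\xi\,dx.
\]

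Finally, I would add the two identities. The cross terms combine as $-\int f(u-\xi)\cdot u\,d\xi\,dx+\int f(u-\xi)\cdot\xi\,d\xi\,dx=-\int f(u-\xi)\cdot(u-\xi)\,d\xi\,dx=-\int f|u-\xi|^2\,d\xi\,dx$, which is exactly the dissipation term with the correct sign. Integrating the resulting differential identity in time from $0$ to $T$ produces \eqref{energy equality}. The main obstacle, and the only nontrivial point, is the justification of the integrations by parts in $\xi$ and the use of \eqref{operator 0} at the moment level: one must know a priori that $f$ and its velocity-moments up to order (at least) two — and in fact order three, to control $\int f|u-\xi|^2\,d\xi$ via Young's inequality together with $\mu\|\nabla u\|_{L^2}^2$ — decay fast enough in $\xi$ that all boundary terms vanish and Fubini applies; this is precisely the content of the uniform bounds from Lemma \ref{Lem2.2}, which may be invoked here since for a genuinely smooth solution these quantities are finite and the computation is then purely formal bookkeeping.
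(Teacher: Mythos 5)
Your proposal is correct and follows essentially the same route as the paper: test the momentum equation with $u$, test the Vlasov--Boltzmann equation with the weight $1+\tfrac12|\xi|^2$, cancel the collision contribution via Fubini together with the normalization \eqref{E1.4} and the speed conservation \eqref{E1.3}, and add the two identities so that the coupling terms combine into $-\int f|u-\xi|^2\,d\xi\,dx$. The only (inessential) difference is bookkeeping --- the paper absorbs $\int f(u-\xi)\cdot\xi$ into $\int f(u-\xi)\cdot u-\int f|u-\xi|^2$ before summing, whereas you combine the cross terms afterwards --- and note that the decay of moments you invoke is the content of Lemma \ref{Lem2.3}, not Lemma \ref{Lem2.2}.
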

\begin{proof}
Taking the scalar product with  $u$ on both sides of \eqref{E1.1}$_2$, and integrating over $\mathbb{T}^3$, we have
\begin{equation}\label{E2.1}
\frac{1}{2}\frac{d}{dt}\int_{\mathbb{T}^3}|u|^2dx +\int_{\mathbb{T}^3}|\nabla u|^2dx=-\int_{\mathbb{T}^3}\int_{\mathbb{R}^3}f(u-\xi)\cdot ud\xi dx.
\end{equation}
Multiplying by $1+\frac{1}{2}|\xi|^2$ on both sides of \eqref{E1.1}$_1$, and integrating over $\mathbb{T}^3\times \mathbb{R}^3$, we have
\begin{equation}\label{E2.2}
\begin{split}
&\frac{d}{dt}\int_{\mathbb{T}^3}\int_{\mathbb{R}^3}f(1+\frac{1}{2}|\xi|^2)d\xi dx+\int_{\mathbb{T}^3}\int_{\mathbb{R}^3}f|u-\xi|^2d\xi dx\\
=&\int_{\mathbb{T}^3}\int_{\mathbb{R}^3}f(u-\xi)\cdot ud\xi dx-\lambda\int_{\mathbb{T}^3}\int_{\mathbb{R}^3}f(1+\frac{1}{2}|\xi|^2)d\xi dx\\
&+ \lambda\int_{\mathbb{T}^3}\int_{\mathbb{R}^3}\int_{\mathbb{R}^3}T(\xi,\xi^\prime)f(t,x,\xi^\prime)d\xi^\prime(1+\frac{1}{2}|\xi|^2)d\xi dx.
\end{split}
\end{equation}
By using  Fubini's theorem, \eqref{E1.3} and \eqref{E1.4}, the last term on the right-hand side of \eqref{E2.2} can be estimated as follows
\begin{equation}
\begin{split}
\label{aaaaa}
& \lambda\int_{\mathbb{T}^3}\int_{\mathbb{R}^3}\int_{\mathbb{R}^3}T(\xi,\xi^\prime)f(t,x,\xi^\prime)d\xi^\prime(1+\frac{1}{2}|\xi|^2)d\xi dx\\
 =&\lambda\int_{\mathbb{T}^3}\int_{\mathbb{R}^3}\int_{\mathbb{R}^3} T(\xi,\xi^\prime)f(t,x,\xi^\prime)(1+\frac{1}{2}|\xi^\prime|^2)d\xi d\xi^\prime dx\\
= &\lambda\int_{\mathbb{T}^3}\int_{\mathbb{R}^3}  f(t,x,\xi)(1+\frac{1}{2}|\xi|^2)d\xi dx.
\end{split}
\end{equation}
Adding \eqref{E2.1} to \eqref{E2.2} and using \eqref{aaaaa},  \eqref{energy equality} follows.
\end{proof}

\bigbreak

To develop further estimates, we will rely on the following lemma.
\begin{lemma}[\cite{Yu}]\label{Lem2.2}
Assume that $T(\xi,\xi^\prime)$ satisfies \eqref{E1.4} and \eqref{E1.5}. Then  there exists a constant $K>0$, such that
\begin{equation}\label{E2.4}
\int_{\mathbb{R}^3}T(\xi,\xi^\prime)d\xi^\prime\leq K<\infty.
\end{equation}
\end{lemma}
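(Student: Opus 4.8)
The plan is to show that the self-similarity hypothesis \eqref{E1.5} together with the conservation of speed \eqref{E1.3} and the normalization \eqref{E1.4} forces the integral $\int_{\mathbb{R}^3}T(\xi,\xi')\,d\xi'$ to be uniformly bounded, even though the integration here is in the \emph{second} variable (the post-collision velocity) rather than the first, for which normalization is given. The starting point is to exploit \eqref{E1.3}: since $T(\xi,\xi')$ is supported on the set $\{|\xi|=|\xi'|\}$, the kernel is really a measure concentrated on the sphere of radius $|\xi|$ in the $\xi'$ variable. Concretely, write $\xi'=r\sigma$ with $r=|\xi'|$ and $\sigma\in\mathbb{S}^2$, and use the self-similarity \eqref{E1.5}, $T(\xi,\xi')=H(|\xi'|)\,T\!\left(\frac{\xi}{|\xi'|},\frac{\xi'}{|\xi'|}\right)$, to reduce everything to an integral of the ``profile'' $T$ over the unit sphere times a radial factor involving $H$.

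First I would pin down $H$. Apply the normalization \eqref{E1.4} with a fixed $\xi'$:
\[
1=\int_{\mathbb{R}^3}T(\xi,\xi')\,d\xi=\int_{\mathbb{R}^3}H(|\xi'|)\,T\!\left(\tfrac{\xi}{|\xi'|},\tfrac{\xi'}{|\xi'|}\right)d\xi.
\]
Substituting $\xi=|\xi'|\eta$ gives $d\xi=|\xi'|^3\,d\eta$, so $1=H(|\xi'|)\,|\xi'|^3\int_{\mathbb{R}^3}T\!\left(\eta,\tfrac{\xi'}{|\xi'|}\right)d\eta$. Because of \eqref{E1.3} the profile $T(\eta,\omega)$ is concentrated on $|\eta|=|\omega|=1$, i.e. on $\mathbb{S}^2$ in $\eta$; let $c:=\int_{\mathbb{S}^2}T(\eta,\omega)\,dS(\eta)$, which by symmetry of the unit-sphere profile is a constant independent of $\omega\in\mathbb{S}^2$ (or, if one does not want to assume that, is bounded above and below by positive constants since $\mathbb{S}^2$ is compact and $T$ is continuous and not identically zero). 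Then $H(|\xi'|)=\dfrac{1}{c\,|\xi'|^3}$, so $H(r)r^3$ is constant.

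Next I would compute the quantity of interest. Fix $\xi$ and integrate in $\xi'$; again by \eqref{E1.3} only $|\xi'|=|\xi|$ contributes, so passing to polar coordinates $\xi'=|\xi|\sigma$, $d\xi'=|\xi|^3\,dS(\sigma)\,d(\text{radial})$ concentrated at radius $|\xi|$,
\[
\int_{\mathbb{R}^3}T(\xi,\xi')\,d\xi'=\int_{\mathbb{S}^2}H(|\xi|)\,T\!\left(\tfrac{\xi}{|\xi|},\sigma\right)|\xi|^3\,dS(\sigma)=H(|\xi|)|\xi|^3\int_{\mathbb{S}^2}T\!\left(\tfrac{\xi}{|\xi|},\sigma\right)dS(\sigma).
\]
The prefactor $H(|\xi|)|\xi|^3$ is the constant $1/c$ found above, and the remaining spherical integral $\int_{\mathbb{S}^2}T(\tfrac{\xi}{|\xi|},\sigma)\,dS(\sigma)$ is a continuous function of $\tfrac{\xi}{|\xi|}\in\mathbb{S}^2$, hence bounded by some $M<\infty$ on the compact sphere. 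Therefore $\int_{\mathbb{R}^3}T(\xi,\xi')\,d\xi'\le M/c=:K<\infty$, which is \eqref{E2.4}.

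The main obstacle — and the point that needs the most care — is the rigorous handling of the measure concentrated on the sphere $\{|\xi|=|\xi'|\}$: literally, $T$ cannot be an $L^1_{loc}$ function on $\mathbb{R}^3\times\mathbb{R}^3$ satisfying both \eqref{E1.4} and \eqref{E1.3}, so one must interpret $T(\xi,\cdot)$ as a surface measure on the sphere of radius $|\xi|$ whose restriction to the sphere has a density (the profile) in the angular variable, and do the polar-coordinate bookkeeping consistently in both variables. Once the convention is fixed (as it implicitly is in \cite{Legar} and \cite{Yu}), the estimate is exactly the scaling computation above; the self-similarity \eqref{E1.5} is precisely what makes the radial factor $H(r)r^3$ constant, so that the $\xi'$-integral is controlled by the $\xi$-normalization up to the compactness of $\mathbb{S}^2$. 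Since the statement is quoted from \cite{Yu}, I would either cite that reference for the measure-theoretic setup or spell out the surface-measure interpretation in a sentence before carrying out the computation.
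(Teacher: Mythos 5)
The paper does not actually prove this lemma: it is imported verbatim from \cite{Yu}, so there is no internal argument to compare yours against. Judged on its own terms, your scaling computation is the right first move, and it can be streamlined: applying \eqref{E1.4} with $\xi'$ replaced by the unit vector $\xi'/|\xi'|$ gives $\int_{\mathbb{R}^3}T(\eta,\xi'/|\xi'|)\,d\eta=1$ directly, so the change of variables $\xi=|\xi'|\eta$ forces $H(r)r^3\equiv 1$ without any appeal to rotational symmetry of the profile or to the constant $c$ you introduce (your ``by symmetry'' claim for $c$ is unjustified by the hypotheses, but also unnecessary). You are also right to flag that \eqref{E1.3} and \eqref{E1.4} are incompatible for an $L^1_{\mathrm{loc}}$ kernel and that a surface-measure convention is being suppressed; that criticism applies to the paper as much as to your write-up.

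The genuine gap is the last step. After your reduction, everything hinges on the bound $\sup_{\omega\in\mathbb{S}^2}\int_{\mathbb{S}^2}T(\omega,\sigma)\,dS(\sigma)<\infty$, and you obtain it by declaring this to be a continuous function on the compact sphere, hence bounded. Continuity (or boundedness) of $T$ is not among the hypotheses \eqref{E1.4}--\eqref{E1.5}, and the normalization \eqref{E1.4} controls integrals in the \emph{first} variable only. A nonnegative profile on $\mathbb{S}^2\times\mathbb{S}^2$ with $\int_{\mathbb{S}^2}S(\omega,\sigma)\,dS(\omega)=1$ for every $\sigma$ can perfectly well have $\int_{\mathbb{S}^2}S(\omega,\sigma)\,dS(\sigma)$ unbounded in $\omega$: take for instance $S(\omega,\sigma)=g(\omega)h(\sigma)\big/\!\int g+(1-h(\sigma))/(4\pi)$ with $0\le h\le 1$ and $g\in L^1(\mathbb{S}^2)$ unbounded. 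So the passage from a first-variable normalization to a second-variable bound is exactly where the content of the lemma lives, and it does not follow from the stated assumptions; it requires extra structure on the angular profile (boundedness, symmetry, or an explicit hypothesis of the form you are trying to derive) that must be imported from \cite{Yu}. As written, your argument assumes the conclusion on the unit sphere.
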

%\begin{proof}
%The proof is motivated by the work of \cite{Legar}. From \eqref{E1.3}, we deduce
%\begin{equation}\label{E2.5}
%T(\xi,\xi^\prime)=0,\ if\ |\xi|>|\xi^\prime|.
%\end{equation}

%By \eqref{E1.4} and \eqref{E2.5}, we have
%\begin{equation}
%\begin{split}
%1=\int_{\mathbb{R}^3}T(\xi,\xi^\prime)d\xi=&\int_{B(0,|\xi^\prime|)}T(\xi,\xi^\prime)d\xi+\int_{|\xi|>|\xi^\prime|}T(\xi,\xi^\prime)d\xi\\
%=& \int_{B(0,|\xi^\prime|)}T(\xi,\xi^\prime)d\xi.
%\end{split}
%\end{equation}
%From \eqref{E1.5}, we have
%\begin{equation}
%\begin{split}
%1=& H(|\xi^\prime|)\int_{B(0,|\xi^\prime|)}T(\frac{\xi}{|\xi^\prime|},\frac{\xi^\prime}{|\xi^\prime|})d\xi\\
%=& H(|\xi^\prime|)\int_{B(0,1)}T(z,\frac{\xi^\prime}{|\xi^\prime|})|\xi^\prime|^3dz\\
%=& |\xi^\prime|^3  H(|\xi^\prime|),
%\end{split}
%\end{equation}
%where $z=\frac{\xi}{|\xi^\prime|}$. This gives
%\begin{equation}
%H(|\xi^\prime|)=\frac{1}{|\xi^\prime|^3 },
%\end{equation}
%and by \eqref{E1.3}, we have
%\begin{equation*}
%\begin{split}
%\int_{\mathbb{R}^3}T(\xi,\xi^\prime)d\xi^\prime=&\int_{B(0,|\xi^\prime|)} H(|\xi^\prime|)T(\frac{\xi}{|\xi^\prime|},\frac{\xi^\prime}{|\xi^\prime|})d\xi^\prime\\
%=& \frac{1}{|\xi|^3 }\int_{B(0,|\xi|)} T(\frac{\xi}{|\xi^\prime|},\frac{\xi^\prime}{|\xi^\prime|})d\xi^\prime\\
%=& \frac{1}{|\xi|^3 }\int_{B(0,1)} T(\frac{\xi}{|\xi|},\eta)|\xi|^3d\eta\\
%=& \int_{B(0,1)} T(\frac{\xi}{|\xi|},\eta)d\eta\leq K.
%\end{split}
%\end{equation*}
%\end{proof}

\bigbreak

\noindent In what follows, we denote
\begin{equation*}
m_\alpha f(t,x)=\int_{\mathbb{R}^3}|\xi|^\alpha fd\xi,\ \ and\   M_\alpha f(t)=\int_{\mathbb{T}^3}\int_{\mathbb{R}^3}|\xi|^\alpha fdxd \xi,
\end{equation*}
here $\alpha\geq 0$ is a constant.
Clearly,
\begin{equation*}
M_\alpha f(t)=\int_{\mathbb{T}^3}m_\alpha f dx.
\end{equation*}

\begin{lemma}[\cite{Hamdache}]\label{Lem2.3}
Let $\beta>0$ and let $f$ be a nonnegative function in $L^\infty((0,T)\times \mathbb{T}^3\times\mathbb{R}^3),$  such that $m_\beta f(t,x)<+\infty,$ for a.e. $(t,x)$. Then the following estimate  holds for any $\alpha<\beta$:
\begin{equation}
m_\alpha f(t,x)\leq C(\|f(t,x,\cdot)\|_{L^\infty(\mathbb{R}^3)}+1)m_\beta f(t,x)^{\frac{\alpha+3}{\beta+3}}, \ a.e. \ (t,x).
\end{equation}
\end{lemma}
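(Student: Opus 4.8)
The plan is to prove this by the standard truncation-and-optimization argument for interpolating moments of a nonnegative kinetic density. I would fix a point $(t,x)$ at which $f(t,x,\cdot)\in L^\infty(\mathbb{R}^3)$ and $m_\beta f(t,x)<+\infty$; by hypothesis this holds for a.e.\ $(t,x)$, so it suffices to bound $m_\alpha f(t,x)$ at such points. If $m_\beta f(t,x)=0$, then $|\xi|^\beta f(t,x,\xi)=0$ for a.e.\ $\xi$, hence $f(t,x,\cdot)=0$ a.e.\ and the asserted inequality is trivial; thus we may assume $m_\beta f(t,x)>0$.

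First I would split the velocity integral at an arbitrary radius $R>0$. Writing $\mathbb{R}^3=\{|\xi|\le R\}\cup\{|\xi|>R\}$ and using $\alpha-\beta<0$ on the outer region,
\begin{equation*}
m_\alpha f(t,x)=\int_{|\xi|\le R}|\xi|^\alpha f\,d\xi+\int_{|\xi|>R}|\xi|^{\alpha-\beta}|\xi|^\beta f\,d\xi\le\|f(t,x,\cdot)\|_{L^\infty(\mathbb{R}^3)}\int_{|\xi|\le R}|\xi|^\alpha\,d\xi+R^{\alpha-\beta}m_\beta f(t,x).
\end{equation*}
Since $\int_{|\xi|\le R}|\xi|^\alpha\,d\xi=\tfrac{4\pi}{\alpha+3}R^{\alpha+3}$ (the weight $|\xi|^\alpha$ being locally integrable because $\alpha>-3$, which holds in all the applications in this paper), this yields $m_\alpha f(t,x)\le C_\alpha\|f(t,x,\cdot)\|_{L^\infty(\mathbb{R}^3)}R^{\alpha+3}+R^{\alpha-\beta}m_\beta f(t,x)$.

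Next I would choose $R$ so as to balance the two contributions, namely
\begin{equation*}
R=\left(\frac{m_\beta f(t,x)}{\|f(t,x,\cdot)\|_{L^\infty(\mathbb{R}^3)}+1}\right)^{\frac{1}{\beta+3}},
\end{equation*}
and substitute. Using the identities $1-\tfrac{\alpha+3}{\beta+3}=\tfrac{\beta-\alpha}{\beta+3}$ and $1+\tfrac{\alpha-\beta}{\beta+3}=\tfrac{\alpha+3}{\beta+3}$, both terms collapse to a constant multiple of $(\|f(t,x,\cdot)\|_{L^\infty(\mathbb{R}^3)}+1)^{\frac{\beta-\alpha}{\beta+3}}\,m_\beta f(t,x)^{\frac{\alpha+3}{\beta+3}}$. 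Finally, since $0<\tfrac{\beta-\alpha}{\beta+3}\le1$ (the upper bound because $\alpha>-3$) and $\|f(t,x,\cdot)\|_{L^\infty(\mathbb{R}^3)}+1\ge1$, one has $(\|f(t,x,\cdot)\|_{L^\infty(\mathbb{R}^3)}+1)^{\frac{\beta-\alpha}{\beta+3}}\le\|f(t,x,\cdot)\|_{L^\infty(\mathbb{R}^3)}+1$, and the claim follows with a constant $C=C(\alpha,\beta)$.

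This is essentially a one-shot computation once the dyadic-type splitting is in place; the only points requiring care are the degenerate case $m_\beta f(t,x)=0$, the sign condition $\alpha<\beta$ (which is exactly what makes the tail integral controllable by the factor $R^{\alpha-\beta}$ applied to $m_\beta f$), and the elementary exponent arithmetic verifying that the chosen $R$ equalizes the two terms. I do not foresee a substantive obstacle.
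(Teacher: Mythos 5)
Your proof is correct and is the standard truncation-and-optimization argument: the paper does not reprove this lemma (it is quoted from Hamdache), but the identical splitting at a radius $R$ followed by the choice $R=\bigl(m_\beta f\bigr)^{1/(\beta+3)}$ is exactly what the authors themselves carry out in the proof of their Lemma 3.4, so your route coincides with theirs. The exponent arithmetic, the role of the $+1$ in avoiding division by zero, and the treatment of the degenerate case $m_\beta f(t,x)=0$ all check out.
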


%Denote $\overline{g}(t,x,\xi)=g*\theta_\epi(x),$  where we define $\theta_\epsilon$ a mollifier such that $\theta_\epsilon(x)=\epsilon^{-3}\theta(\frac{x}{\epsilon})$ with $\theta\in C^\infty(\mathbb{R}^3)$, $\theta\geq 0$, and $\int_{\mathbb{R}^3}\theta dx=1$.
%We give the following lemma, which will be useful.
%\begin{lemma}[\cite{Yu}]\label{Lem2.7}
%For any function $h=h(\xi),$ we have
%\beq\notag
%\int_{\R^3}\overline{g}h(\xi)d\xi=\overline{\int_{\R^3}ghd\xi}.
%\eeq
%\end{lemma}
\section{An approximation scheme}\label{S3}
In this  section, we construct  smooth solutions of an approximation system.   For that purpose, we define a finite-dimensional space
$
X_m=\textrm{span}\{\phi_i\}_{i=1}^{m},
$
where  $\{\phi_i\}_{i=1}^{m}\subset C_0^\infty(\T^3)$ is an orthonormal  basis of $\{v\in L^2(\T^3): \textrm{div}v=0 \  \textrm{in}\  \mathcal{D}^\prime\}.$ Define $Y_m=C([0,T];X_m). $

We propose the following approximation scheme associated with  the  Navier-Stokes-Boltzmann equations \eqref{E1.1}
\begin{equation}\label{E3.1}
\left\{\begin{array}{l}
\partial_t f_m
  +
  \xi\cdot \nabla_x f_m+{\rm div}_\xi((\tilde{u}-\xi)f_m)=-\lambda f_m+\lambda\int_{\mathbb{R}^3}T(\xi,\xi^\prime)f_m(t,x,\xi^\prime)d\xi^\prime,\\[2mm]
\partial_t u_m
 +(\tilde{u}\cdot \nabla_x)u_m+\nabla_x P_m-  \Delta_x u_m=-\int_{\mathbb{R}^3}(\tilde{u}-\xi)f_m  d\xi,    \\[2mm]
 {\rm div}u_m=0,  \ \ \ \   \qquad x\in \mathbb{T}^3,\ \xi\in \mathbb{R}^3, \  t\in(0,T), \\[2mm]
 \end{array}
        \right.
\end{equation}
with initial data
\begin{equation}\label{E3.2}
(f_m,u_m)|_{t=0}=(f_0^\epsilon(x,\xi),u_0^\epsilon(x)), \ \ \  x\in \mathbb{T}^3,\ \xi\in \mathbb{R}^3,
\end{equation}
where $\tilde{u}$ is given in $Y_m$. The initial data  $f_0^\epsilon$ and $u_0^\epsilon$ are $C^\infty$ functions  such that  $f_0^\epsilon\rightarrow f_0$ strongly in $L^p(\mathbb{T}^3\times \mathbb{R}^3)$, for all $p<\infty$, and weakly in $weak\ast$-$L^\infty(\mathbb{T}^3\times \mathbb{R}^3)$;$f_0^\epsilon$ has a compact support with respect to $\xi$ in  $\R^3$,  $M_3 f_0^\epsilon$ is uniformly bounded with respect to $\epi,$  and $u_0^\epsilon\rightarrow u_0$ strongly in $L^2(\mathbb{T}^3)$.

\subsection{The weak solutions of kinetic part}
\noindent The first step of solving \eqref{E3.1}-\eqref{E3.2} is to investigate the global existence of weak solutions of  the following problem:
\begin{equation}\label{E3.3-1}
\left\{\begin{array}{l}
\partial_t f_m
  +
  \xi\cdot \nabla_x f_m+{\rm div}_\xi((\tilde{u}-\xi)f_m)=-\lambda f_m+\lambda\int_{\mathbb{R}^3}T(\xi,\xi^\prime)f_m(t,x,\xi^\prime)d\xi^\prime.\\[2mm]
f_m|_{t=0}=f_0^\epsilon. \\[2mm]
 \end{array}
        \right.
\end{equation}
To this end,
we construct a sequence of solutions in $n$ verifying
\begin{equation}\label{E3.4-1}
\left\{\begin{array}{l}
\partial_t f_m^n
  +
  \xi\cdot \nabla_x f_m^n+{\rm div}_\xi((\tilde{u}-\xi)f_m^n)=-\lambda f_m^n+\lambda\int_{\mathbb{R}^3}T(\xi,\xi^\prime)f_m^{n-1}(t,x,\xi^\prime)d\xi^\prime,\\[2mm]
f_m^n|_{t=0}=f_0^\epsilon, \\[2mm]
f_m^0=0.
 \end{array}
        \right.
\end{equation}
The method of characteristics gives us a solution of \eqref{E3.4-1} for any given $f_m^{n-1}$. Indeed, when  $f_m^{n-1}$ is given, we are able to define the trajectories $x(\tau)=x(\tau,t,x,\xi)$ and $\xi(\tau)=\xi(\tau,t,x,\xi)$ with the following ODE system
\begin{equation}\notag
\left\{\begin{split}
&
\frac{d x}{d\tau}(\tau)=\xi(\tau),\\
&
\frac{d\xi}{d\tau}(\tau)= \tilde{u}(t, x(\tau))-\xi(\tau),\\
&
x(t,t,x,\xi)=x,\\
&
\xi(t,t,x,\xi)=\xi.
\end{split}
\right.
\end{equation}
Along the trajectories above, the solutions of \eqref{E3.4-1} satisfy
\begin{equation}\label{E3.3}
\frac{d}{d\tau}f^n_m(\tau,x(\tau),\xi(\tau))=(3-\lambda)f^n_m(\tau,x(\tau),\xi(\tau))+\lambda\int_{\mathbb{R}^3}T(\xi(\tau),\xi^\prime)f^{n-1}_m(\tau,x(\tau),\xi^\prime)d\xi^\prime.
\end{equation}
By   standard theory of ODES,  there exists a smooth solution of \eqref{E3.3} as follows
\begin{equation}\label{E3.4}
\begin{split}
&f^n_m(t,x,\xi)
=e^{(3-\lambda)t}f_0^\epsilon(x(0,t,x,\xi),\xi(0,t,x,\xi))\\
&+\lambda\int_0^t\int_{\mathbb{R}^3}e^{(3-\lambda)(t-\tau)}T(\xi(\tau,t,x,\xi),\xi^\prime)f^{n-1}_m(\tau,x(\tau,t,x,\xi),\xi^\prime)d\xi^\prime d\tau.
\end{split}
\end{equation}
It is also a smooth solution of \eqref{E3.4-1} for any given $f_m^{n-1}$.
Thanks to \eqref{E3.4}, we find
\beq\notag
f^n_m\geq 0, \ {\rm for}\ {\rm all} \  n\geq 0,
\eeq
and $\{f^n_m\}_{n=1}^{\infty} $ is an increasing sequence of measure functions with respect to $n$. For this proof, we refer the reader  to \cite{Legar}.\\

Next we shall apply a compactness argument to recover  weak solutions of problem \eqref{E3.3-1} by passing to the limit in $f_m^n$ as $n$ goes to infinity.
To this end, we need to derive uniform bounds on $f^{n}_m$ with respect to $n$. Note that, $\{f^n_m\}_{n=1}^{\infty} $ is   increasing sequence of nonnegative functions. The Gronwall inequality yields a uniform estimates of $f_m^n$ with respect to $n$.\\

The following Lemma \ref{Lem3.1} provides that $f_m^n$ is bounded in $ L^\infty(0,T;L^p(\T^3\times\R^3)),\text{ for any } p\geq 1,$ with respect to $n>0.$
\begin{lemma}\label{Lem3.1}
For any $n\geq 0$ and fixed $m>0$, $f^n_m(t,x,\xi)$ is bounded in
\beq\notag
L^\infty(0,T;L^\infty(\T^3\times\R^3))\cap L^\infty(0,T;L^1(\T^3\times\R^3)),
\eeq
and hence
\beq\label{E3.6}
f^n_m(t,x,\xi)\ is \ bounded\ in\ L^\infty(0,T;L^p(\T^3\times\R^3)),\text{ for any } p\geq 1.
\eeq
\end{lemma}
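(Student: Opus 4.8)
The plan is to argue by induction on $n$ that $f^n_m$ is bounded in $L^\infty(0,T;L^\infty(\mathbb{T}^3\times\mathbb{R}^3))$ and in $L^\infty(0,T;L^1(\mathbb{T}^3\times\mathbb{R}^3))$, with bounds that are uniform in $n$; the $L^p$ claim \eqref{E3.6} then follows by interpolation, since for any nonnegative $g$ one has $\|g\|_{L^p}^p=\int g^p\le \|g\|_{L^\infty}^{p-1}\|g\|_{L^1}$. For the $L^\infty$ bound I would work directly from the representation formula \eqref{E3.4}. Taking the sup over $(x,\xi)$ and using the normalization $\int_{\mathbb{R}^3}T(\xi,\xi')\,d\xi'\le K$ from Lemma \ref{Lem2.2} (this is exactly where \eqref{E2.4} is needed: the integral in \eqref{E3.4} is against $d\xi'$, not $d\xi$), one gets
\begin{equation*}
\|f^n_m(t)\|_{L^\infty}\le e^{(3-\lambda)t}\|f_0^\epsilon\|_{L^\infty}+\lambda K\int_0^t e^{(3-\lambda)(t-\tau)}\|f^{n-1}_m(\tau)\|_{L^\infty}\,d\tau .
\end{equation*}
Since $\{f^n_m\}$ is increasing in $n$, a clean way to close this is to guess the $n$-independent bound $\|f^n_m(t)\|_{L^\infty}\le \|f_0^\epsilon\|_{L^\infty}e^{(3-\lambda+\lambda K)t}$ and verify it by induction: plugging the bound for $f^{n-1}_m$ into the right-hand side and computing the elementary time integral reproduces the same bound for $f^n_m$ (the base case $f^0_m=0$ is trivial). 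Thus $f^n_m$ is bounded in $L^\infty(0,T;L^\infty)$ uniformly in $n$, with a constant $C=C(\|f_0^\epsilon\|_{L^\infty},\lambda,K,T)$ independent of $n$.

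For the $L^1$ bound I would integrate \eqref{E3.4-1} directly over $\mathbb{T}^3\times\mathbb{R}^3$. The transport term $\xi\cdot\nabla_x f^n_m$ integrates to zero on the torus, and $\mathrm{div}_\xi((\tilde u-\xi)f^n_m)$ integrates to zero provided $f^n_m$ decays in $\xi$ — which it does, because $f_0^\epsilon$ has compact support in $\xi$ and the characteristics have bounded speed, so $f^n_m(t,\cdot,\cdot)$ is supported in a ball of $\xi$-radius growing at most exponentially in $t$ (this support propagation also follows from \eqref{E3.4} together with the ODE for $\xi(\tau)$, since $\tilde u\in Y_m$ is bounded). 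Hence
\begin{equation*}
\frac{d}{dt}\int_{\mathbb{T}^3}\int_{\mathbb{R}^3}f^n_m\,d\xi\,dx=-\lambda\int f^n_m\,d\xi\,dx+\lambda\int\!\!\int\!\!\int T(\xi,\xi')f^{n-1}_m(t,x,\xi')\,d\xi'\,d\xi\,dx .
\end{equation*}
By Fubini and the normalization \eqref{E1.4}, $\int_{\mathbb{R}^3}T(\xi,\xi')\,d\xi=1$, the last term equals $\lambda\int f^{n-1}_m\,d\xi'\,dx=\lambda\|f^{n-1}_m(t)\|_{L^1}$. So $\frac{d}{dt}\|f^n_m(t)\|_{L^1}\le \lambda\|f^{n-1}_m(t)\|_{L^1}$ (dropping the nonpositive $-\lambda\|f^n_m\|_{L^1}$), and the same induction/guess strategy — try $\|f^n_m(t)\|_{L^1}\le \|f_0^\epsilon\|_{L^1}e^{\lambda t}$ — closes the estimate uniformly in $n$. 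Then $\|f^n_m(t)\|_{L^p}\le \|f^n_m(t)\|_{L^\infty}^{1-1/p}\|f^n_m(t)\|_{L^1}^{1/p}$ gives \eqref{E3.6}.

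The one point that needs care — and I would flag it as the main obstacle — is the justification that the $\xi$-divergence term integrates to zero, i.e. that $f^n_m$ genuinely has enough decay (here, compact support) in $\xi$ for every $n$ and every $t\in[0,T]$; without this the $L^1$ identity above is only an inequality in the wrong direction or not valid at all. This is handled by propagating the compact support of $f_0^\epsilon$ along the characteristics: from $\frac{d\xi}{d\tau}=\tilde u(t,x(\tau))-\xi(\tau)$ and $\|\tilde u\|_{L^\infty([0,T]\times\mathbb{T}^3)}<\infty$ (since $\tilde u\in Y_m=C([0,T];X_m)$ and $X_m$ is finite-dimensional with smooth basis), Grönwall's inequality bounds $|\xi(\tau)|$ on $[0,t]$, so the backward characteristic map $(x,\xi)\mapsto(x(0),\xi(0))$ keeps $\xi$ in a fixed ball; by \eqref{E3.4}, $f^n_m(t,x,\xi)=0$ whenever that ball misses $\mathrm{supp}_\xi f_0^\epsilon$ and — inductively — whenever it misses $\mathrm{supp}_\xi f^{n-1}_m$, which is itself compact. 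Hence for each fixed $t$ all the $f^n_m(t)$ share a common compact $\xi$-support, and the boundary terms vanish. The remaining computations (the elementary time integrals in the two inductions, and the interpolation) are routine.
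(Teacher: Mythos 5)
Your proposal is correct and rests on the same two pillars as the paper's proof: the representation formula \eqref{E3.4} together with the bound $\int_{\R^3}T(\xi,\xi')\,d\xi'\le K$ of Lemma \ref{Lem2.2} for the $L^\infty$ estimate, and Fubini plus the normalization \eqref{E1.4} for the $L^1$ estimate, followed by interpolation. The differences are in how the recursion in $n$ is closed and in the Eulerian versus Lagrangian bookkeeping. The paper substitutes $\|f^{n-1}_m\|\le\|f^n_m\|$ (monotonicity of the iterates) into the right-hand side and then applies Gronwall to a single inequality for $f^n_m$; you instead verify an explicit $n$-independent ansatz ($\|f_0^\epsilon\|_{L^\infty}e^{(3-\lambda+\lambda K)t}$, resp.\ $\|f_0^\epsilon\|_{L^1}e^{\lambda t}$) by induction, which is self-contained, avoids the monotonicity input altogether, and sidesteps the mildly circular flavor of applying Gronwall to a quantity whose finiteness is part of what is being proved. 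For the $L^1$ bound the paper integrates the characteristic equation \eqref{E3.3} over $(0,t)\times\T^3\times\R^3$ (so no boundary term in $\xi$ ever appears, at the cost of a change-of-variables along the flow and an extra $+3\int f^n_m$ on the right), whereas you integrate the Eulerian equation \eqref{E3.4-1} and must justify that $\int_{\R^3}\mathrm{div}_\xi((\tilde u-\xi)f^n_m)\,d\xi=0$; you correctly identify this as the delicate point and handle it by propagating the compact $\xi$-support of $f_0^\epsilon$. One detail worth making explicit there: the gain term $\lambda\int T(\xi,\xi')f^{n-1}_m(\xi')\,d\xi'$ could a priori spread the support to all of $\xi$-space, and its vanishing for $|\xi|$ large relies on the standing speed-preservation assumption \eqref{E1.3} (so that $T(\xi,\xi')=0$ unless $|\xi|=|\xi'|$, confining the gain to the speeds already present in $\mathrm{supp}_\xi f^{n-1}_m$); with that said, your support-propagation induction closes and the argument is complete.
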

\bp
Thanks to Lemma \ref{Lem2.2}, we can deduce the following bound  from  \eqref{E3.4}
\beq\label{E3.7}
\begin{split}
\|f^n_m(t,x,\xi)\|_{L^\infty}\leq & e^{|3-\lambda |T}\|f^\epi_0\|_{L^\infty}+\lambda e^{|3-\lambda |T}K\int_0^t\|f^{n-1}_m(\tau,x,\xi)\|_{L^\infty}d\tau\\
\leq & e^{|3-\lambda |T}\|f_0\|_{L^\infty}+\lambda e^{|3-\lambda |T}K\int_0^t\|f^{n}_m(\tau,x,\xi)\|_{L^\infty}d\tau,\\
\end{split}
\eeq
where we used that $\{f_m^n\}$ is an increasing sequence.

\noindent Applying the Gronwall inequality to \eqref{E3.7}, one obtains
\beq\notag
\|f^n_m\|_{L^\infty}\leq e^{|3-\lambda |T}\|f_0\|_{L^\infty}(1+\lambda e^{|3-\lambda |T}Kte^{\lambda e^{|3-\lambda |T}Kt}),\ for\ any\  0\leq t\leq T.
\eeq
Next, integrating \eqref{E3.3} over $(0,t)\times \T^3\times \R^3$, we find
\beq\notag
\begin{split}
\int_{\T^3}\int_{\R^3}f^n_m(t,x,\xi)d\xi dx=&\int_{\T^3}\int_{\R^3}f^\epi_0(x(0,t,x,\xi),\xi(0,t,x,\xi))d\xi dx\\
 &+(3-\lambda) \int_0^t\int_{\T^3}\int_{\R^3}f^n_m(\tau,x(\tau),\xi(\tau))d\xi dx d\tau\\
&+\lambda   \int_0^t\int_{\T^3}\int_{\R^3}\int_{\mathbb{R}^3}T(\xi(\tau),\xi^\prime)f^{n-1}_m(t,x(\tau),\xi^\prime)d\xi^\prime d\xi dx d\tau,
\end{split}
\eeq
which implies
\beq \label{E3.8}
\begin{split}
\int_{\T^3}\int_{\R^3}f^n_m(t,x,\xi)d\xi dx\leq &\int_{\T^3}\int_{\R^3}f^\epi_0(x(0,t,x,\xi),\xi(0,t,x,\xi))d\xi dx\\
 &+\lambda   \int_0^t\int_{\T^3}\int_{\R^3}\int_{\mathbb{R}^3}T(\xi(\tau),\xi^\prime)f^{n-1}_m(\tau,x(\tau),\xi^\prime)d\xi^\prime d\xi dx d\tau\\
 &+ 3\int_0^t\int_{\T^3}\int_{\R^3}f^n_m(\tau,x(\tau),\xi(\tau))d\xi dx d\tau\\
\leq &\int_{\T^3}\int_{\R^3}f_0(x,\xi)d\xi dx +\lambda   \int_0^t\int_{\T^3}\int_{\R^3}\int_{\mathbb{R}^3}T(\xi(\tau),\xi^\prime)f^{n-1}_m(\tau,x(\tau),\xi^\prime)d\xi^\prime d\xi dxd\tau \\
&+ 3\int_0^t\int_{\T^3}\int_{\R^3}f^n_m(\tau,x(\tau),\xi(\tau))d\xi dx d\tau,
\end{split}
\eeq
where we have used the facts that $T(\xi,\xi^\prime)\geq 0 $, $
f^n_m\geq 0 \ {\rm for}\ {\rm all} \  n\geq 0,$
 and $\lambda>0.$\\

\noindent Thanks to \eqref{E1.4} and   Fubini's Theorem, we have
\beq \notag
\begin{split}
\int_{\T^3}\int_{\R^3}f^n_m(t,x,\xi)d\xi dx\leq &\int_{\T^3}\int_{\R^3}f_0(x,\xi)d\xi dx +(\lambda +3)  \int_0^t\int_{\T^3}\int_{\R^3}f^{n}_m(\tau,x,\xi) d\xi dx d\tau.
\end{split}
\eeq
Applying  the Gronwall inequality  again, this yields
\beq\notag
\|f^n_m\|_{L^\infty(0,T;L^1(\T^3\times \R^3))}\leq \lambda e^{|3-\lambda |T}K(1+(\lambda+3)t e^{(\lambda+3)t}),\ for\ any\  0\leq t\leq T.
\eeq
\ep
\noindent Next we shall show that $\int_{\mathbb{R}^3}T(\xi,\xi^\prime)f^{n}_m(t,x,\xi^\prime)d\xi^\prime$ is bounded in $
L^\infty(0,T;L^p(\T^3\times\R^3)),\ for\ any\ p\geq 1$ in the following lemma.
\begin{lemma}\label{Lem4.2}
For any $n\geq 0$, $\int_{\mathbb{R}^3}T(\xi,\xi^\prime)f^{n}_m(t,x,\xi^\prime)d\xi^\prime$ is bounded in
\beq\notag
L^\infty(0,T;L^p(\T^3\times\R^3)),\ for\ any\ p\geq 1.
\eeq
\end{lemma}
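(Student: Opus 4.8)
The plan is to write $g^n_m(t,x,\xi) := \int_{\R^3} T(\xi,\xi^\prime) f^n_m(t,x,\xi^\prime)\,d\xi^\prime$ and to establish, uniformly in $n$ and in $t\in[0,T]$, both an $L^\infty(\T^3\times\R^3)$ bound and an $L^1(\T^3\times\R^3)$ bound on $g^n_m$. The $L^p$ bound for every $p\geq 1$ then follows from the elementary interpolation inequality $\|g\|_{L^p}\leq \|g\|_{L^1}^{1/p}\,\|g\|_{L^\infty}^{1-1/p}$, valid for nonnegative $g\in L^1\cap L^\infty$ (just write $\int |g|^p = \int |g|^{p-1}|g| \leq \|g\|_{L^\infty}^{p-1}\|g\|_{L^1}$).

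For the $L^\infty$ bound I would invoke Lemma \ref{Lem2.2}: since $T\geq 0$ and $f^n_m\geq 0$,
\[
|g^n_m(t,x,\xi)| \leq \|f^n_m(t,\cdot,\cdot)\|_{L^\infty(\T^3\times\R^3)}\int_{\R^3} T(\xi,\xi^\prime)\,d\xi^\prime \leq K\,\|f^n_m(t)\|_{L^\infty(\T^3\times\R^3)},
\]
and the right-hand side is bounded uniformly in $n$ and $t\in[0,T]$ by Lemma \ref{Lem3.1}. For the $L^1$ bound I would integrate in $(x,\xi)$ and exchange the order of integration by Fubini's theorem (legitimate because $Tf^n_m\geq 0$), then use the normalization \eqref{E1.4}, namely $\int_{\R^3}T(\xi,\xi^\prime)\,d\xi=1$:
\[
\int_{\T^3}\int_{\R^3} g^n_m(t,x,\xi)\,d\xi\,dx = \int_{\T^3}\int_{\R^3} f^n_m(t,x,\xi^\prime)\Bigl(\int_{\R^3} T(\xi,\xi^\prime)\,d\xi\Bigr)d\xi^\prime\,dx = \|f^n_m(t)\|_{L^1(\T^3\times\R^3)},
\]
which is again bounded uniformly in $n$ and $t\in[0,T]$ by Lemma \ref{Lem3.1}. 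Combining the two bounds with the interpolation inequality above yields the claim, with a constant depending only on the data, $\lambda$, $K$ and the time horizon through the constants already produced in Lemma \ref{Lem3.1}.

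There is no genuine obstacle here; the only point requiring attention is that the two available controls on the kernel play complementary roles, corresponding to the two different integration variables. Lemma \ref{Lem2.2} bounds the integral of $T$ against $d\xi^\prime$ (the post-collision variable frozen), and this is what makes $g^n_m$ pointwise bounded; the normalization \eqref{E1.4} bounds the integral against $d\xi$ and is what makes the total mass of $g^n_m$ coincide with that of $f^n_m$. It is also worth recording explicitly that all the bounds obtained are uniform in $n$, since this is exactly what is needed to pass to the limit $n\to\infty$ in the next step of the construction.
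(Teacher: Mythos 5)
Your proof is correct and follows essentially the same route as the paper: the pointwise bound via Lemma \ref{Lem2.2} combined with the $L^\infty$ control of $f^n_m$ from Lemma \ref{Lem3.1}, and the $L^1$ bound via Fubini's theorem and the normalization \eqref{E1.4}, with $L^p$ obtained by interpolation. The only difference is that you spell out the interpolation step explicitly, which the paper leaves implicit.
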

\bp
We derive the following bound from \eqref{E2.4} and \eqref{E3.6}:
\beq
\left\|\int_{\mathbb{R}^3}T(\xi,\xi^\prime)f^{n}_m(t,x,\xi^\prime)d\xi^\prime\right\|_{L^\infty}\leq  \|f^{n}_m\|_{L^\infty}\int_{\mathbb{R}^3}T(\xi,\xi^\prime)d\xi^\prime\leq C.
\eeq
Also, by \eqref{E1.4}, \eqref{E3.6} and  Fubini's theorem, we find
\beq
\begin{split}
& \int_{\T^3}\int_{\R^3}\left|\int_{\mathbb{R}^3}T(\xi,\xi^\prime)f^{n}_m(t,x,\xi^\prime)d\xi^\prime\right|d\xi dx\\
=& \int_{\T^3}\int_{\R^3}\int_{\mathbb{R}^3}T(\xi,\xi^\prime)f^{n}_m(t,x,\xi^\prime)d\xi^\prime d\xi dx\\
=& \int_{\T^3}\int_{\R^3}f^{n}_m d\xi dx\leq C,
\end{split}
\eeq
here $C>0$  depends only on the initial data, $\lambda$ and $T$.
This completes the proof of Lemma \ref{Lem4.2}.
\ep

In order to pass to the limit as $n$ goes to infinity,  we need   bounds on $\int_{\R^3} f^n_m d\xi$,  $\int_{\R^3}\xi f^n_m d\xi$  and $\int_{\R^3}|\xi|^2f^n_m d\xi$ stated in the following lemma.
\begin{lemma}\label{Lem4.3-1}
For any $n\geq 0$, if
\beq\notag
M_k f^\epi_0 =\int_{\T^3}\int_{\R^3}|\xi|^kf^\epi_0d\xi dx<+\infty,
\eeq
for some $k\geq 1$,  then the following estimates hold
\beq\label{E4.10-}
\left\|\int_{\R^3} f^n_m d \xi\right\|_{L^\infty(0,T;L^\frac{3+k}{3}(\T^3))}\leq C(\lambda,T,M_k f^\epi_0,\|f^\epi_0\|_{L^\infty},\tilde{u}),
\eeq
\beq\label{E4.10}
\left\|\int_{\R^3}\xi f^n_m d \xi\right\|_{L^\infty(0,T;L^\frac{3+k}{4}(\T^3))}\leq C(\lambda,T,M_k f^\epi_0,\|f^\epi_0\|_{L^\infty},\tilde{u}),
\eeq
and
\beq\label{E4.11}
\left\|\int_{\R^3}|\xi|^2 f^n_m d \xi\right\|_{L^\infty(0,T;L^\frac{3+k}{5}(\T^3))}\leq C(\lambda,T,M_k f^\epi_0,\|f^\epi_0\|_{L^\infty},\tilde{u}).
\eeq
\end{lemma}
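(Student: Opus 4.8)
All three estimates will be derived from a single moment bound combined with the uniform $L^\infty\cap L^1$ control of Lemma \ref{Lem3.1}, by interpolation in $\xi$. The plan is first to prove that
\[
M_k f_m^n(t)=\int_{\T^3}\int_{\R^3}|\xi|^k f_m^n\,d\xi\,dx\le C\bigl(\lambda,T,M_kf_0^\epsilon,\|f_0^\epsilon\|_{L^\infty},\tilde u\bigr)\qquad\text{for all }t\in[0,T],
\]
with a constant \emph{independent of} $n$. Granting this, one applies Lemma \ref{Lem2.3} with $\beta=k$ and $\alpha\in\{0,1,2\}$ to obtain, for a.e.\ $(t,x)$, the pointwise bounds $\int_{\R^3}f_m^n\,d\xi\le C(\|f_m^n\|_{L^\infty}+1)(m_kf_m^n)^{3/(3+k)}$, $\int_{\R^3}|\xi|f_m^n\,d\xi\le C(\|f_m^n\|_{L^\infty}+1)(m_kf_m^n)^{4/(3+k)}$ and $\int_{\R^3}|\xi|^2f_m^n\,d\xi\le C(\|f_m^n\|_{L^\infty}+1)(m_kf_m^n)^{5/(3+k)}$. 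Raising these to the powers $\tfrac{3+k}{3},\tfrac{3+k}{4},\tfrac{3+k}{5}$ respectively, integrating over $\T^3$, and using $\int_{\T^3}m_kf_m^n\,dx=M_kf_m^n$ together with the uniform $L^\infty$ bound of Lemma \ref{Lem3.1}, yields exactly \eqref{E4.10-}, \eqref{E4.10} and \eqref{E4.11} (for the vector moment, note $|\int_{\R^3}\xi f_m^n\,d\xi|\le m_1f_m^n$ pointwise).

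\textbf{The moment identity.} To bound $M_kf_m^n$ I would multiply the first line of \eqref{E3.4-1} by $|\xi|^k$ and integrate over $\T^3\times\R^3$. Since $f_0^\epsilon$ has compact $\xi$-support, the representation \eqref{E3.4} shows $f_m^n(t,\cdot,\cdot)$ has compact $\xi$-support for every $t\le T$: the backward characteristics satisfy $|\xi(0,t,x,\xi)|\lesssim e^{T}|\xi|+C$, and by \eqref{E1.3} the kernel $T(\xi(\tau),\xi')$ vanishes unless $|\xi(\tau)|=|\xi'|$, so the support is propagated inductively in $n$. Consequently all boundary terms at $|\xi|=\infty$ vanish, Fubini applies, and the three contributions are computed as follows: the transport term $\int_{\T^3}\int_{\R^3}|\xi|^k\,\xi\cdot\nabla_xf_m^n\,d\xi\,dx$ vanishes by periodicity in $x$; the $\xi$-divergence term, integrated by parts with $\nabla_\xi|\xi|^k=k|\xi|^{k-2}\xi$, equals $-k\int_{\T^3}\int_{\R^3}|\xi|^{k-2}(\xi\cdot\tilde u)f_m^n\,d\xi\,dx+k\,M_kf_m^n$; and the gain term, after Fubini and using $|\xi|^kT(\xi,\xi')=|\xi'|^kT(\xi,\xi')$ from \eqref{E1.3} together with $\int_{\R^3}T(\xi,\xi')\,d\xi=1$ from \eqref{E1.4} — exactly the cancellation in \eqref{aaaaa} — equals $\lambda M_kf_m^{n-1}$. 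Hence
\[
\frac{d}{dt}M_kf_m^n+(k+\lambda)M_kf_m^n=k\int_{\T^3}\int_{\R^3}|\xi|^{k-2}(\xi\cdot\tilde u)f_m^n\,d\xi\,dx+\lambda M_kf_m^{n-1}.
\]

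\textbf{Closing the estimate.} The drag term is estimated by $k\|\tilde u\|_{L^\infty}M_{k-1}f_m^n$ (for $k=1$ it is directly $\le\|\tilde u\|_{L^\infty}M_0f_m^n$, and for general $k\ge1$ the integrand is bounded by $\|\tilde u\|_{L^\infty}|\xi|^{k-1}f_m^n$, so there is no singularity at $\xi=0$). Using the $L^1$ bound $M_0f_m^n\le C$ from Lemma \ref{Lem3.1}, the Hölder inequality $M_{k-1}f_m^n\le(M_0f_m^n)^{1/k}(M_kf_m^n)^{(k-1)/k}$ and Young's inequality, one absorbs $k\|\tilde u\|_{L^\infty}M_{k-1}f_m^n\le\tfrac{k}{2}M_kf_m^n+C$. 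Now one invokes the monotonicity $f_m^{n-1}\le f_m^n$ recorded just after \eqref{E3.4}, which gives $M_kf_m^{n-1}\le M_kf_m^n$, so the gain term is dominated by the loss term, leaving $\frac{d}{dt}M_kf_m^n+\tfrac{k}{2}M_kf_m^n\le C$. Gronwall's inequality then yields $M_kf_m^n(t)\le M_kf_0^\epsilon+C$ for all $t\in[0,T]$, uniformly in $n$, which is the reduction claim, and the lemma follows.

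\textbf{Main obstacle.} The essential difficulty is uniformity in $n$: the bare recursion $\tfrac{d}{dt}M_kf_m^n\le C+\lambda M_kf_m^{n-1}-(k+\lambda)M_kf_m^n$ would only produce constants growing (geometrically) with $n$, and it is precisely the combination of the monotonicity of the iteration $\{f_m^n\}$ with the fact that the breakup operator conserves the $|\xi|^k$-moment (so that the gain term is $\lambda M_kf_m^{n-1}$ rather than something worse) that lets the gain term be absorbed into the dissipative $-kM_kf_m^n$ term. The remaining points — justifying the $\xi$-integration by parts and the use of Fubini — are handled by the propagated compact $\xi$-support of $f_m^n$, or alternatively by a cutoff-and-pass-to-the-limit argument.
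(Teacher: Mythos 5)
Your proposal is correct and follows essentially the same route as the paper: a Gronwall bound on $M_kf_m^n$ uniform in $n$ (using Fubini with \eqref{E1.3}--\eqref{E1.4} to reduce the gain term to $\lambda M_kf_m^{n-1}$ and the monotonicity of $\{f_m^n\}$ to close the recursion), followed by the interpolation Lemma \ref{Lem2.3} with $\alpha=0,1,2$ and $\beta=k$. The only difference is cosmetic: the paper computes the moment balance along characteristics (so the drag term is absorbed into the Jacobian factor), whereas you work in Eulerian variables and control the drag term via $\|\tilde u\|_{L^\infty}$ and Young's inequality, which is legitimate since $\tilde u\in Y_m$.
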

\bp

Multiplying by $|\xi|^k$  on both sides of \eqref{E3.3}, then integrating over $\T^3\times\R^3$,     for $k\geq 1$, we have
\beq\label{E4.12-1}
\begin{split}
&\frac{d}{d\tau}\int_{\T^3}\int_{\R^3}|\xi|^kf^n_m(\tau,x(\tau),\xi(\tau)) d\xi dx+\lambda\int_{\T^3}\int_{\R^3}|\xi|^kf^n_m(\tau,x(\tau),\xi(\tau)) d\xi dx\\
=& \lambda \int_{\T^3}\int_{\R^3}\int_{\R^3}T(\xi(\tau),\xi')|\xi|^kf^{n-1}_m(\tau,x(\tau),\xi')\,d\xi' d\xi dx+3 \int_{\T^3}\int_{\R^3}|\xi|^kf^n_m(\tau,x(\tau),\xi(\tau)) d\xi dx\\
\leq &\lambda \int_{\T^3}\int_{\R^3}|\xi|^kf^{n-1}_m(\tau,x(\tau),\xi) d\xi dx +3\int_{\T^3}\int_{\R^3}|\xi|^{k}f^n_m(\tau,x(\tau),\xi(\tau)) d\xi dx.
\end{split}
\eeq
Integrating with respect to time $\tau$ over $(0,t)$, we have
\begin{equation}
\begin{split}
\label{inequality on k order}
&\int_{\T^3}\int_{\R^3}|\xi|^kf^n_m(t,x,\xi) d\xi dx
\\&\leq
\lambda \int_0^T\int_{\T^3}\int_{\R^3}|\xi|^kf^{n}_m d\xi dx\,dt +3\int_0^T\int_{\T^3}\int_{\R^3}|\xi|^{k}f^n_m d\xi dx\,dt
\\&+\int_{\T^3}\int_{\R^3}|\xi|^kf_0^\eps d\xi dx.
\end{split}
\end{equation}
Applying the Gronwall inequality to  \eqref{inequality on k order},
there exists constant $K>0$ such that,
$$\int_{\T^3}\int_{\R^3}|\xi|^kf^n_m(t,x,\xi) d\xi dx\leq Ke ^{Kt},$$
for any $m,n>0,$ and $k\geq 1.$ Here  $K$ depends on the initial data.
This estimate, together  with Lemma \ref{Lem2.3},  yields
\eqref{E4.10-}, \eqref{E4.11} and \eqref{E4.10}.

\ep

Similarly, we can show the following lemma:
\begin{lemma}\label{Lem2.4-1}
 Assume that $f_0\in L^\infty(\T^3\times\R^3)\cap L^1(\T^3\times\R^3)$ and $|\xi|^kf_0\in L^1(\T^3\times\R^3)$. If  $f_m^n\in L^\infty((0,T)\times\T^3\times\R^3)$, then
\beq\notag
M_kf_m^n(t)\leq C_{N,T}\left((M_kf_0)^{\frac{1}{3+k}}+(\|f_m^n\|_{L^\infty}+1)\|\tilde{u}\|_{L^p(0,T;L^{3+k}(\T^3))}\right)^{3+k},
\eeq
for all $0\leq t\leq T.$
\end{lemma}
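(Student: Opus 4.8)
The plan is to mimic the structure of the proof of Lemma \ref{Lem4.3-1}, but now keeping careful track of the dependence on $\tilde u$ rather than hiding it inside the generic constant $K$, so that the final bound is stated in terms of $\|\tilde u\|_{L^p(0,T;L^{3+k}(\T^3))}$. First I would multiply equation \eqref{E3.3} by $|\xi|^k$ and integrate over $\T^3\times\R^3$; using \eqref{E1.3}--\eqref{E1.4} and Fubini's theorem to dispose of the gain term exactly as in \eqref{aaaaa}, and using the monotonicity $f_m^{n-1}\le f_m^n$ as in \eqref{E3.7}, I arrive at a differential inequality of the schematic form
\begin{equation*}
\frac{d}{dt}M_kf_m^n(t)\le C\,M_kf_m^n(t)+(\text{source}),
\end{equation*}
where the source term comes from the friction/transport structure. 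The point is to recognise that the genuinely new contribution (the one producing the $\tilde u$-dependence) is of the type $\int_{\T^3}\int_{\R^3}|\xi|^{k-1}|\tilde u|\,f_m^n\,d\xi\,dx$, which arises when one integrates by parts the ${\rm div}_\xi((\tilde u-\xi)f_m^n)$ term against $|\xi|^k$; the $-\xi$ part gives a favourable sign (it contributes $-k\,M_k f_m^n$) while the $\tilde u$ part must be kept.

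Next I would estimate that cross term by Hölder in $x$: bounding $\int_{\R^3}|\xi|^{k-1}f_m^n\,d\xi$ in $L^{(3+k)/(k+2)}(\T^3)$ via Lemma \ref{Lem2.3} (with $\alpha=k-1$, $\beta=k$) and pairing it against $\tilde u\in L^{3+k}(\T^3)$. This is precisely why the exponent $3+k$ appears and why the $L^\infty$-norm of $f_m^n$ enters, through the prefactor $(\|f_m^n\|_{L^\infty}+1)$ in Lemma \ref{Lem2.3}. After applying Lemma \ref{Lem2.3} and Young's inequality one converts the cross term into a controllable multiple of $M_kf_m^n$ plus a term $\propto(\|f_m^n\|_{L^\infty}+1)^{3+k}\|\tilde u(t)\|_{L^{3+k}}^{p'}$ for the appropriate conjugate exponent; integrating in time and invoking Gronwall then yields the stated estimate with the $3+k$ power on the outside. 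The initial-data contribution $M_kf_0$ enters to the power $1/(3+k)$ after the interpolation is unwound, matching the form in the statement.

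The main obstacle I anticipate is bookkeeping rather than conceptual: one must carefully isolate the exact power of $\tilde u$ (and hence pin down the exponent $p$, or rather the admissible range of $p$ for which the right-hand side is finite) and make sure the Hölder pairing closes, i.e. that $\tfrac{k+2}{3+k}+\tfrac{1}{3+k}=1$, which it does. A secondary subtlety is that Lemma \ref{Lem2.3} requires $m_\beta f<\infty$ a.e., so one should first run the argument with $k$ replaced by a truncated moment (or use the compact $\xi$-support of $f_0^\epsilon$, which propagates along characteristics for fixed $m$) to justify all manipulations, and only then pass to the uniform bound. Since the lemma is stated "similarly", I would simply remark that the argument of Lemma \ref{Lem4.3-1} applies verbatim upon tracking the $\tilde u$-dependence through Lemma \ref{Lem2.3} and Gronwall's inequality, and record the resulting constant in the displayed form.
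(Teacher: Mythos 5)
Your proposal follows essentially the same route as the paper: multiply the equation by $|\xi|^k$, dispose of the gain term by Fubini together with $|\xi|=|\xi'|$ and \eqref{E1.4}, use the monotonicity of $f_m^n$ in $n$, and control the cross term $k\int_{\T^3}\int_{\R^3}|\tilde u|\,|\xi|^{k-1}f_m^n\,d\xi\,dx$ by H\"older with exponent $3+k$ combined with the moment interpolation (the paper carries out the $R$-truncation explicitly, which is exactly the content of Lemma \ref{Lem2.3}), and finish with Gronwall. The only cosmetic difference is the closing step: the paper's bound arises by dividing by $(M_kf_m^n)^{\frac{k+2}{k+3}}$ and integrating (a nonlinear Gronwall giving the $(\cdot)^{1/(3+k)}$ structure directly), whereas you close with Young's inequality plus linear Gronwall; both yield the stated estimate.
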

\begin{proof}
Multiplying by $|\xi|^k$  on both sides of \eqref{E3.4-1}, one finds
\begin{equation*}
\begin{split}
&\frac{d}{d\tau}\int_{\T^3}\int_{\R^3}|\xi|^kf^n_m(t,x,\xi) d\xi dx- k\int_{\T^3}\int_{\R^3}(\tilde{u}-\xi)f_m^n|\xi|^{k-1}\,d\xi\,dx
\\&=-\lambda\int_{\T^3}\int_{\R^3}|\xi|^kf^n_m(t,x,\xi) d\xi dx+ \lambda \int_{\T^3}\int_{\R^3}\int_{\R^3}T(\xi,\xi')|\xi|^kf^{n-1}_m(t,x,\xi')\,d\xi' d\xi dx,\\
\end{split}
\end{equation*}
which yields

\begin{equation*}
\begin{split}
&\frac{d}{d\tau}\int_{\T^3}\int_{\R^3}|\xi|^kf^n_m(t,x,\xi) d\xi dx+ k\int_{\T^3}\int_{\R^3}|\xi|^kf_m^n\,d\xi\,dx+\lambda\int_{\T^3}\int_{\R^3}|\xi|^kf^n_m(t,x,\xi) d\xi dx
\\&\leq k\int_{\T^3}\int_{\R^3}|\tilde{u}||\xi|^{k-1}f_m^n\,d\xi\,dx+ \lambda \int_{\T^3}\int_{\R^3}\int_{\R^3}T(\xi,\xi')|\xi|^kf^{n-1}_m(t,x,\xi')\,d\xi' d\xi dx,\\
&\leq  k\int_{\T^3}\int_{\R^3}|\tilde{u}||\xi|^{k-1}f_m^n\,d\xi\,dx+ \lambda \int_{\T^3}\int_{\R^3}\int_{\R^3}|\xi|^kf^{n-1}_m(t,x,\xi)\,d\xi dx
\\
&\leq  k\int_{\T^3}\int_{\R^3}|\tilde{u}||\xi|^{k-1}f_m^n\,d\xi\,dx+ \lambda \int_{\T^3}\int_{\R^3}\int_{\R^3}|\xi|^kf^n_m(t,x,\xi)\,d\xi dx,
\end{split}
\end{equation*}
where  we have used that  $f_m^n$ is an increasing sequence with respect to $n$. Thus, we have
\begin{equation}
\label{E3.26}
\begin{split}
&\frac{d}{d\tau}\int_{\T^3}\int_{\R^3}|\xi|^kf^n_m(t,x,\xi) d\xi dx+ k\int_{\T^3}\int_{\R^3}|\xi|^kf_m^n\,d\xi\,dx\\
&\leq  k\int_{\T^3}\int_{\R^3}|\tilde{u}||\xi|^{k-1}f_m^n\,d\xi\,dx.
\end{split}
\end{equation}
Using H\"{o}lder's inequality,  the right-hand side of \eqref{E3.26} can be estimated as follows:
\beq\notag
k\int_{\T^3}\int_{\R^3}|\xi|^{k-1}f_m^{n}|\tilde{u}|d\xi dx\leq k\|\tilde{u}\|_{L^q(\T^3)}\|m_{k-1}f_m^{n}\|_{L^{q^\prime}(\T^3)},
\eeq
here $\frac{1}{q}+\frac{1}{q^\prime}=1$.  Let $R>0$ be fixed, then we have
\beq\notag
m_{k-1}f_m^n(t,x)\leq C\|f_m^n(t)\|_{L^\infty}R^{k+2}+\frac{1}{R}\int_{|\xi|>R} |\xi|^kf_m^nd\xi,
\eeq
taking $R=(\int_{\R^3} |\xi|^kf_m^nd\xi)^{\frac{1}{k+3}}$ and $q=k+3$,   we get
\beq\notag
k\int_{\T^3}\int_{\R^3}|\xi|^{k-1}f_m^n|\tilde{u}|d\xi dx\leq  Ck \|w(t)\|_{L^{k+3}(\T^3)}(\|f_m^n(t)\|_{L^\infty}+1)\left(\int_{\T^3}\int_{\R^3} |\xi|^kf_m^nd\xi dx\right)^{\frac{k+2}{k+3}},
\eeq
substituting this into \eqref{E3.26},  we complete the proof of Lemma \ref{Lem2.4-1}.
\end{proof}

Thus, we are ready to show the following existence of weak solutions of approximation \eqref{E3.3-1}.
\begin{lemma}
For any $T>0$ and fixed $m>0,$ there exists a weak solution of the  approximation \eqref{E3.3-1} in the following sense:
\begin{equation}\label{E3.17}
\begin{split}
&\int_{\mathbb{T}^3}\int_{\mathbb{R}^3}f_m(t)\phi(t,x,\xi)d\xi dx-\int_0^t\int_{\mathbb{T}^3}\int_{\mathbb{R}^3}f_m(\phi_t+\xi\cdot \nabla_x \phi +(\tilde{u}-\xi)\cdot \nabla_{\xi}\phi)d\xi dx ds\\
=&\int_{\mathbb{T}^3}\int_{\mathbb{R}^3}f^\epi_0\phi(0,x,\xi)d\xi dx
+\lambda\int_0^t\int_{\mathbb{T}^3}\int_{\mathbb{R}^3}f_m\phi d\xi dx ds\\
&
-\lambda\int_0^t\int_{\mathbb{T}^3}\int_{\mathbb{R}^3}\int_{\mathbb{R}^3}T(\xi,\xi^\prime)f_m(t,x,\xi^\prime)\phi d\xi^\prime d\xi dx ds.
\end{split}
\end{equation}
In particular, the solution satisfies the following bounds:
\beq\label{E3.27}
\|f_m(t,x,\xi)\|_{ L^\infty(0,T;L^p(\T^3\times\R^3))}\leq C(\lambda,T,\|f^\epi_0\|_{L^\infty\cap L^1(\T^3\times \R^3)}),\ for\ any\ p\geq 1,
\eeq
\beq\label{E3.28}
\left\|\int_{\mathbb{R}^3}T(\xi,\xi^\prime)f_m(t,x,\xi^\prime)d\xi^\prime\right\|_{\ L^\infty(0,T;L^p(\T^3\times\R^3))}\leq C(\lambda,T,\|f^\epi_0\|_{L^\infty\cap L^1(\T^3\times \R^3)}),\ for\ any\ p\geq 1,
\eeq
\beq\label{E3.29}
\left\|\int_{\R^3} f_m d \xi\right\|_{L^\infty(0,T;L^2(\T^3))}\leq C(\lambda,T,M_3f_0^\eps,\|f^\epi_0\|_{L^\infty},\tilde{u}),
\eeq
\beq\label{E3.30-}
\left\|\int_{\R^3}\xi f_m d \xi\right\|_{L^\infty(0,T;L^\frac{3}{2}(\T^3))}\leq C(\lambda,T,M_3 f_0^\eps,\|f^\epi_0\|_{L^\infty},\tilde{u}),
\eeq
and
\beq\label{E3.31}
\left\|\int_{\R^3}|\xi|^2 f_m d \xi\right\|_{L^\infty(0,T;L^\frac{6}{5}(\T^3))}\leq C(\lambda,T,M_3 f_0^\eps,\|f^\epi_0\|_{L^\infty},\tilde{u}).
\eeq

\end{lemma}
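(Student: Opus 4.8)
The plan is to pass to the limit $n\to\infty$ in the sequence $\{f_m^n\}$ constructed via \eqref{E3.4-1}--\eqref{E3.4} and to check that the limit $f_m$ is a weak solution of \eqref{E3.3-1} satisfying all the stated bounds. The key structural fact, already recorded below \eqref{E3.4}, is that $\{f_m^n\}_{n\geq 1}$ is a nonnegative, monotone increasing sequence. Hence for a.e.\ $(t,x,\xi)$ the pointwise limit $f_m(t,x,\xi)=\lim_{n\to\infty}f_m^n(t,x,\xi)\in[0,+\infty]$ exists. The uniform $L^\infty$ bound from Lemma \ref{Lem3.1} shows this limit is finite a.e.\ and that $f_m\in L^\infty(0,T;L^\infty(\T^3\times\R^3))$ with the same bound; the uniform $L^1$ bound from Lemma \ref{Lem3.1}, together with the monotone convergence theorem, gives $f_m\in L^\infty(0,T;L^1(\T^3\times\R^3))$ and $f_m^n\to f_m$ in $L^1$, and interpolation (or monotone convergence applied to $|f_m^n|^p$) yields $f_m^n\to f_m$ in $L^\infty(0,T;L^p)$ for all $p\geq 1$, which is \eqref{E3.27}. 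By monotone convergence applied to the $\xi'$-integral against the nonnegative kernel $T$, $\int_{\R^3}T(\xi,\xi')f_m^n(\cdot,\xi')\,d\xi'\to\int_{\R^3}T(\xi,\xi')f_m(\cdot,\xi')\,d\xi'$ pointwise and in $L^p$, and Lemma \ref{Lem4.2} transfers the bound to the limit, giving \eqref{E3.28}.

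Next I would pass to the limit in the weak formulation. For each $n$, \eqref{E3.4-1} holds in the weak/distributional sense: against $\phi\in C^\infty([0,T]\times\T^3\times\R^3)$ one has the identity \eqref{E3.17} with $f_m$ replaced by $f_m^n$ on the left and by $f_m^{n-1}$ in the last (gain) term. Every term is linear in $f_m^n$ or in $f_m^{n-1}$ and is tested against a fixed smooth compactly supported (in $\xi$, after using a cutoff or the decay of the explicit solution; if $\phi$ has noncompact $\xi$-support one uses the moment bounds below) function, so the $L^1_{loc}$ (indeed $L^p$) convergence established above lets every integral pass to the limit; the transport coefficients $\xi$ and $\tilde u-\xi$ are fixed and $\tilde u\in Y_m$ is smooth, so no difficulty arises there. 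Thus $f_m$ satisfies \eqref{E3.17}. Nonnegativity $f_m\geq 0$ is inherited from $f_m^n\geq 0$.

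For the moment bounds \eqref{E3.29}--\eqref{E3.31}, I would use that $M_3 f_0^\epsilon$ is uniformly bounded (by hypothesis on the approximate data) so that Lemma \ref{Lem4.3-1} applies with $k=3$: it gives $\int_{\R^3}f_m^n\,d\xi$ bounded in $L^\infty(0,T;L^{2}(\T^3))$, $\int_{\R^3}\xi f_m^n\,d\xi$ bounded in $L^\infty(0,T;L^{3/2}(\T^3))$, and $\int_{\R^3}|\xi|^2 f_m^n\,d\xi$ bounded in $L^\infty(0,T;L^{6/5}(\T^3))$, all uniformly in $n$ (the constant depends on $\lambda,T,M_3 f_0^\epsilon,\|f_0^\epsilon\|_{L^\infty},\tilde u$ but not $n$). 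Monotone convergence in $\xi$ upgrades these to the same bounds for $\int_{\R^3}|\xi|^\alpha f_m\,d\xi$, $\alpha=0,1,2$, by Fatou together with weak-$*$ lower semicontinuity of the norms, which is exactly \eqref{E3.29}--\eqref{E3.31}. A clean way to organize this is: control the $L^1_{x,\xi}$ norm of $|\xi|^3 f_m^n$ via Gronwall as in Lemma \ref{Lem4.3-1}, then invoke Lemma \ref{Lem2.3} pointwise in $(t,x)$ with $\beta=3$ and $\alpha\in\{0,1,2\}$ to get the stated Lebesgue exponents $\frac{3+3}{3+\alpha}$ in $x$.

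The main obstacle I anticipate is purely bookkeeping rather than conceptual: making sure the gain term $\lambda\int T(\xi,\xi')f_m^n(\xi')\,d\xi'$ is genuinely finite and in the right space along the iteration — this is precisely what Lemma \ref{Lem2.2} (the bound $\int T(\xi,\xi')\,d\xi'\leq K$, which uses the self-similarity \eqref{E1.5}) is for — and being careful that when $\phi$ does not have compact support in $\xi$ the terms $\int f_m\,\xi\cdot\nabla_x\phi$ and $\int f_m(\tilde u-\xi)\cdot\nabla_\xi\phi$ still make sense, which is guaranteed by the second-moment bound \eqref{E3.31}. Because $\{f_m^n\}$ is monotone, no weak compactness argument or velocity-averaging lemma is needed here; the monotone convergence theorem does all the work, and the only quantitative inputs are Lemmas \ref{Lem2.2}, \ref{Lem2.3}, \ref{Lem3.1}, \ref{Lem4.2} and \ref{Lem4.3-1}.
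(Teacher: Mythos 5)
Your proposal is correct, and it reaches the same destination as the paper from the same quantitative inputs (Lemmas \ref{Lem3.1}, \ref{Lem4.2} and \ref{Lem4.3-1}), but the mechanism of the limit passage is genuinely different. The paper does not use the monotone convergence theorem at this stage: it extracts weak-$*$ limits of $\int_{\R^3} f_m^n\,d\xi$, $\int_{\R^3}\xi f_m^n\,d\xi$ and $\int_{\R^3}|\xi|^2 f_m^n\,d\xi$ in $L^\infty(0,T;L^2)$, $L^\infty(0,T;L^{3/2})$ and $L^\infty(0,T;L^{6/5})$ respectively (see \eqref{E4.17-}--\eqref{E4.18}), handles the gain term by a duality argument --- testing $\int_{\R^3}T(\xi,\xi')\bigl(f_m^{n-1}-f_m\bigr)(\xi')\,d\xi'$ against $\psi\in L^2$ and using Fubini together with \eqref{E1.4} to reduce it to the weak convergence of $\int_{\R^3}f_m^n\,d\xi$ (this is \eqref{E4.12}) --- and then passes to the limit in the weak formulation \eqref{E3.15}. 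You instead exploit the monotonicity of $\{f_m^n\}$ to get a pointwise increasing limit and then \emph{strong} $L^1$ (hence $L^p$) convergence on space-time, after which every linear term in \eqref{E3.15} converges trivially; this is arguably cleaner and avoids the duality step for the collision operator entirely, at the price of leaning on monotonicity (which the paper only uses implicitly, to avoid passing to subsequences). Both routes transfer the bounds \eqref{E3.27}--\eqref{E3.31} to the limit by lower semicontinuity or Fatou in the same way. One small imprecision on your side: monotone convergence gives $\|f_m^n(t)-f_m(t)\|_{L^p(\T^3\times\R^3)}\to 0$ for each fixed $t$ and convergence in $L^p((0,T)\times\T^3\times\R^3)$, but not uniformly in $t$, so the claimed convergence ``in $L^\infty(0,T;L^p)$'' should be weakened; this does not affect the argument, since the fixed-time term in \eqref{E3.17} only needs convergence at that $t$ and all other terms are space-time integrals.
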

\begin{proof}
First,   from  \eqref{E4.10-}, \eqref{E4.10} and \eqref{E4.11}, we deduce the following convergence results
\beq\label{E4.17-}
\int_{\R^3} f^n_m d \xi \rightharpoonup  \int_{\R^3} f_m d\xi \ weakly(*) \ in\ L^\infty(0,T;L^2(\T^3)),
\eeq
\beq\label{E4.17}
\int_{\R^3}\xi f^n_m d \xi \rightharpoonup  \int_{\R^3}\xi f_m d\xi \ weakly(*) \ in\ L^\infty(0,T;L^\frac{3}{2}(\T^3)),
\eeq
and
\beq\label{E4.18}
\int_{\R^3}|\xi|^2 f^n_m d \xi \rightharpoonup  \int_{\R^3}|\xi|^2 f_m d\xi \ weakly(*) \ in\ L^\infty(0,T;L^\frac{6}{5}(\T^3)),
\eeq
as $n\rightarrow \infty.$

In order to pass to the limits as $n\to \infty,$ we investigate  the convergence of the operator $\int_{\mathbb{R}^3}T(\xi,\xi^\prime)f^{n}_m(t,x,\xi^\prime)d\xi'$ in a  suitable space.
To this end, for any function $\psi(t,x)$ in $L^2(0,T;L^2(\R^3))$,  we find
\begin{equation}
\begin{split}
&\int_{\T^3}\int_{\R^3}\left(\int_{\mathbb{R}^3}T(\xi,\xi^\prime)f^{n-1}_m(t,x,\xi^\prime)d\xi^\prime-\int_{\mathbb{R}^3}T(\xi,\xi^\prime)f_m(t,x,\xi^\prime)d\xi^\prime\right)\psi(t,x)d\xi dx\\
=& \int_{\T^3}\int_{\R^3}\int_{\R^3}T(\xi,\xi^\prime)(f^{n-1}_m(t,x,\xi^\prime)-f_m(t,x,\xi^\prime))\psi(t,x)d\xi^\prime d\xi dx\\
=&  \int_{\T^3}\int_{\R^3}(f^{n-1}_m(t,x,\xi)-f_m(t,x,\xi))\psi(t,x) d\xi dx
\rightarrow 0
\end{split}
\end{equation}
as $n\rightarrow \infty$, thanks to \eqref{E4.17-}.    By Lemma \ref{Lem4.2}, this yields
\beq\label{E4.12}
\int_{\mathbb{R}^3}T(\xi,\xi^\prime)f^{n}_m(t,x,\xi^\prime)d\xi^\prime \rightharpoonup \int_{\mathbb{R}^3}T(\xi,\xi^\prime)f_m(t,x,\xi^\prime)d\xi^\prime \ weakly(*)\ in\  L^\infty(0,T;L^2(\T^3)),\\
\eeq
 for any $q>1$.

\bigbreak
Since $f_m^n$ given by \eqref{E3.4} is a smooth solution of problem \eqref{E3.4-1}, it satisfies the following weak formulation
\begin{equation}\label{E3.15}
\begin{split}
&\int_{\mathbb{T}^3}\int_{\mathbb{R}^3}f^n_m(t,x,\xi)\phi(t,x,\xi)d\xi dx-\int_0^t\int_{\mathbb{T}^3}\int_{\mathbb{R}^3}f^n_m(\phi_t+\xi\cdot \nabla_x \phi +(\tilde{u}-\xi)\cdot \nabla_{\xi}\phi)d\xi dx ds\\
=&\int_{\mathbb{T}^3}\int_{\mathbb{R}^3}f^\epi_0\phi(0,x,\xi)d\xi dx
+\lambda\int_0^t\int_{\mathbb{T}^3}\int_{\mathbb{R}^3}f^n_m\phi d\xi dx ds\\
&
-\lambda\int_0^t\int_{\mathbb{T}^3}\int_{\mathbb{R}^3}\int_{\mathbb{R}^3}T(\xi,\xi^\prime)f^{n-1}_m(t,x,\xi^\prime)\phi d\xi^\prime d\xi dx ds,
\end{split}
\end{equation}
for any test function $\phi\in C^\infty([0,\infty)\times\mathbb{T}^3\times\mathbb{R}^3)$.\\
  Letting $n$ tend to infinity in  \eqref{E3.15}, and using \eqref{E4.12}, \eqref{E4.17-} and \eqref{E4.17}, one obtains
\begin{equation*}
\begin{split}
&\int_{\mathbb{T}^3}\int_{\mathbb{R}^3}f_m(t)\phi(t,x,\xi)d\xi dx-\int_0^t\int_{\mathbb{T}^3}\int_{\mathbb{R}^3}f_m(\phi_t+\xi\cdot \nabla_x \phi +(\tilde{u}-\xi)\cdot \nabla_{\xi}\phi)d\xi dx ds\\
=&\int_{\mathbb{T}^3}\int_{\mathbb{R}^3}f^\epi_0\phi(0,x,\xi)d\xi dx
+\lambda\int_0^t\int_{\mathbb{T}^3}\int_{\mathbb{R}^3}f_m\phi d\xi dx ds\\
&
-\lambda\int_0^t\int_{\mathbb{T}^3}\int_{\mathbb{R}^3}\int_{\mathbb{R}^3}T(\xi,\xi^\prime)f_m(t,x,\xi^\prime)\phi d\xi^\prime d\xi dx ds.
\end{split}
\end{equation*}

\noindent  Here we should  remark that the solution $f_m$ satisfies the following bounds
\beq\label{E3.27!}
\|f_m\|_{ L^\infty(0,T;L^p(\T^3\times\R^3))}\leq C,\ for\ any\ p\geq 1,
\eeq
\beq\label{E3.28!}
\left\|\int_{\mathbb{R}^3}T(\xi,\xi^\prime)f_m(d\xi^\prime\right\|_{\ L^\infty(0,T;L^p(\T^3\times\R^3))}\leq C,\ for\ any\ p\geq 1,
\eeq
\beq\label{E3.29!}
\left\|\int_{\R^3} f_m d \xi\right\|_{L^\infty(0,T;L^2(\T^3))}\leq C,
\eeq
\beq\label{E3.30-!}
\left\|\int_{\R^3}\xi f_m d \xi\right\|_{L^\infty(0,T;L^\frac{3}{2}(\T^3))}\leq C,
\eeq
and
\beq\label{E3.31!}
\left\|\int_{\R^3}|\xi|^2 f_m d \xi\right\|_{L^\infty(0,T;L^\frac{6}{5}(\T^3))}\leq C,
\eeq
where $C$ are positive constants depend only on $\lambda,T,M_3 f_0^\eps$ and $\|f^\epi_0\|_{L^\infty\cap L^1(\T^3\times \R^3)}$.
\end{proof}
\bigbreak
\subsection{ The Navier-Stokes part}
\noindent In this subsection, we shall study the solution of the Navier-Stokes part, and the energy inequality for the whole approximation \eqref{E3.1}.  First, we are able to view the right-hand side of  \eqref{E3.1}$_2$ as a external force of the Navier-Stokes equations.   From \eqref{E3.29} and \eqref{E3.30-}, the right-hand side of \eqref{E3.1}$_2$ can be estimated as follows:
 \beq\label{E3.33--}
\begin{split}
& \left\|-\int_{\mathbb{R}^3}f_m(\tilde{u}-\xi)d\xi\right\|_{L^\infty(0,T;L^{\frac{3}{2}}(\T^3))}\\
&=\left\| -\tilde{u}\int_{\mathbb{R}^3}f_md\xi +\int_{\mathbb{R}^3}\xi f_md\xi\right\|_{L^\infty(0,T;L^{\frac{3}{2}}(\T^3))}\\
& \leq C(m,\lambda,T,M_3 f^\epi_0,\|f^\epi_0\|_{L^\infty\cap L^1(\T^3\times \R^3)},\|\tilde{u}\|_{L^\infty(0,T;L^2(\T^3))}).
\end{split}
\eeq
Thus, we can apply the classical theory of Navier-Stokes equations to solve  \eqref{E3.1}$_2$ when $f_{m}$ is a solution of \eqref{E3.3-1}.
\bigbreak

\noindent
 Next we consider  the following weak formulation of the Navier-Stokes equation \eqref{E3.1}$_2$:
\begin{equation}\label{E3.14}
\begin{split}
& \int_{\mathbb{T}^3}[\partial_tu_m\cdot \varphi+(\tilde{u}\cdot \nabla) u_m\cdot \varphi+\nabla u_m: \nabla \varphi]dx\\
=&-\int_{\mathbb{T}^3}\left(\int_{\mathbb{R}^3}f_m(\tilde{u}-\xi)d\xi\right) \cdot \varphi dx,
\end{split}
\end{equation}
where  $\varphi\in X_m$. Since  $X_m$ is a finite-dimensional space, we can write $u_m$ as follows
\beq\notag
u_m=\sum\limits_{i=1}^{m}\alpha_{im}(t) \varphi_i.
\eeq
By the standard Galerkin method, the approximation \eqref{E3.1}$_2$  yields the following ODE:
\beq\label{E3.34}
\begin{split}
\frac{d}{dt}\alpha_{im}(t)=&-\int_{\T^3}(\tilde{u}\cdot \nabla \varphi_j)\cdot\varphi_idx   \alpha_{jm}(t)-\int_{\T^3} \nabla \varphi_i:\nabla\varphi_j dx \alpha_{jm}(t)\\
-& \int_{\T^3}\left(\int_{\mathbb{R}^3}f_m(\tilde{u}-\xi)d\xi \right)\cdot \varphi_idx.
\end{split}
\eeq
From \eqref{E3.33--}, then by using the classical ODE theory, there exists a unique solution  $\alpha_{im}(t)$$(i=1,\cdots,m)$ of \eqref{E3.34} for any $0\leq t\leq  T_m$, where $0<T_m\leq T.$ This gives us a unique solution $u_m\in Y_m$ of the weak formulation \eqref{E3.14} for any $0\leq t\leq T_m$.

 Next, we can derive the energy inequality for $u_m$. Indeed, taking $\varphi_i$ for \eqref{E3.14} and multiplying the equation by $\alpha_{im}$, then summing  the resulting equality from $i=1$ to $m$,  we have
\beq\notag
\begin{split}
&\frac{1}{2}\frac{d}{dt}\int_{\T^3}|u_m|^2dx+\int_{\T^3}|\nabla u_m|^2dx\\
=& -\int_{\T^3}\int_{\R^3}f_m(\tilde{u}-\xi)\cdot u_md\xi dx\\
\leq & \|u_m\|_{L^\infty(\T^3)}\cdot\left\|\int_{\R^3}f_m(\tilde{u}-\xi) d\xi\right\|_{L^1(\T^3)}\\
\leq & \frac{1}{2}\int_{\T^3}|\nabla u_m|^2dx+ C(m)\left\|\int_{\R^3}f_m(\tilde{u}-\xi) d\xi\right\|_{L^{\frac{3}{2}}(\T^3)}^2,
\end{split}
\eeq
where we have used the  fact that all norms in $X_m$ are equivalent. This yields
\beq\label{E3.35}
\begin{split}
&\int_{\T^3}|u_m|^2dx+\int_0^t\int_{\T^3}|\nabla u_m|^2dxds\leq \int_{\T^3}|u_0^\epi|^2dx
\\&\quad\quad\quad\quad\quad\quad\quad\quad\quad+C(m)\int_0^t \left\|\int_{\R^3}f_m(\tilde{u}-\xi) d\xi\right\|_{L^{\frac{3}{2}}(\T^3)}^2ds.
\end{split}
\eeq
 With the help of \eqref{E3.33--}, this gives us the following estimates
 \begin{equation}
 \begin{split}&
 \label{local estimate for NS equations}\|u_m\|_{L^2(\T^3)}\leq M<\infty
 \quad\text{ for any  } t\in[0,T_m],\\
 & \|\nabla u_m\|_{L^2(0,T;L^2(\T^3))}\leq M.
 \end{split}
 \end{equation}

Now we define a convex set
\beq\notag
\mathbf{A}:=\{\tilde{u}\in C([0,T_m];X_m):  \sup\limits_{0\leq t\leq T_m}\|\tilde{u}\|_{L^2(\T^3)}\leq M,\;\;\Dv\tilde{u}=0\},
\eeq
and a map $S:\mathbf{A}\to \mathbf{A}$ such that $u_m:=S(\tilde{u})$.
We shall apply   Schauder's fixed point theorem to show that the operator $S$ has a fixed point in the following lemma.

\begin{lemma}\label{lemma3.4}
The operator $S$ has a fixed point in $\mathbf{A}$, that is, there is  a point $u_m\in \mathbf{A}$ such that $Su_m=u_m=\tilde{u}$.
\end{lemma}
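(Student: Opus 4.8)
The plan is to verify the hypotheses of Schauder's fixed point theorem for the map $S:\mathbf{A}\to\mathbf{A}$, where $\mathbf{A}$ is the convex set of $\tilde u\in C([0,T_m];X_m)$ with $\sup_{[0,T_m]}\|\tilde u\|_{L^2(\T^3)}\le M$ and $\Dv\tilde u=0$. First I would check that $\mathbf{A}$ is a nonempty, closed, bounded, convex subset of the Banach space $C([0,T_m];X_m)$; convexity and closedness are immediate since the constraints are a norm bound and a linear condition, and nonemptiness is clear (take $\tilde u\equiv 0$ or the projection of $u_0^\eps$). Second, I must confirm that $S$ actually maps $\mathbf{A}$ into itself: given $\tilde u\in\mathbf{A}$, the kinetic subproblem \eqref{E3.3-1} has a solution $f_m$ with the bounds \eqref{E3.27!}--\eqref{E3.31!}, which depend only on $\lambda,T,M_3f_0^\eps,\|f_0^\eps\|_{L^\infty\cap L^1}$ (in particular, uniformly in $\tilde u$ through the $L^2$-bound $M$), and then the Galerkin ODE \eqref{E3.34} produces $u_m=S(\tilde u)$ satisfying the energy estimate \eqref{E3.35}. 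The only subtlety here is that \eqref{E3.35} a priori gives $\|u_m\|_{L^2}^2\le \|u_0^\eps\|_{L^2}^2+C(m)\int_0^t\|\int_{\R^3}f_m(\tilde u-\xi)\,d\xi\|_{L^{3/2}}^2\,ds$, so to close the self-map property I would choose $T_m$ small enough (depending on $m$, $M$ and the data, but not on the particular $\tilde u$) that the right-hand side stays $\le M^2$ for $t\le T_m$; since the constant in \eqref{E3.33--} is uniform over $\mathbf{A}$, such a $T_m$ exists and is independent of $\tilde u$.

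Third, I would establish compactness and continuity of $S$. For compactness it suffices, by the Arzelà--Ascoli theorem in $C([0,T_m];X_m)$ (a finite-dimensional target), to bound $\partial_t u_m=\sum_i\dot\alpha_{im}\varphi_i$ uniformly: reading off \eqref{E3.34}, each $\dot\alpha_{im}$ is controlled by $\|\tilde u\|_{L^2}\le M$, the fixed basis functions, and $\|\int_{\R^3}f_m(\tilde u-\xi)\,d\xi\|_{L^{3/2}}\le C$ from \eqref{E3.33--}, all uniformly over $\mathbf{A}$; hence $\{S(\tilde u):\tilde u\in\mathbf{A}\}$ is equicontinuous and bounded in $C([0,T_m];X_m)$, so precompact. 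For continuity of $S$, suppose $\tilde u_j\to\tilde u$ in $C([0,T_m];X_m)$; I would pass to the limit first in the kinetic problem — the characteristics depend continuously on $\tilde u$, so the representation formula \eqref{E3.4} (or rather its limit, the analogue for \eqref{E3.3-1}) gives $f_m^{(j)}\to f_m$ and hence $\int_{\R^3}f_m^{(j)}(\tilde u_j-\xi)\,d\xi\to\int_{\R^3}f_m(\tilde u-\xi)\,d\xi$ in the relevant space — and then in the linear ODE system \eqref{E3.34}, whose coefficients and forcing now converge, so by continuous dependence for ODEs $S(\tilde u_j)=u_m^{(j)}\to u_m=S(\tilde u)$.

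Having verified that $\mathbf{A}$ is nonempty closed bounded convex, that $S:\mathbf{A}\to\mathbf{A}$, and that $S$ is continuous and compact, Schauder's fixed point theorem yields a fixed point $u_m\in\mathbf{A}$ with $S(u_m)=u_m$, i.e. the pair $(u_m,f_m)$ — with $f_m$ the kinetic solution associated to $\tilde u=u_m$ — solves the full approximate system \eqref{E3.1}. I expect the main obstacle to be the continuity of $S$: establishing stability of the kinetic solution $f_m$ under perturbations of $\tilde u$ requires care, since one must control the flow map of the characteristic ODEs uniformly and then pass that control through the weak formulation \eqref{E3.17} and the nonlocal term $\int_{\R^3}T(\xi,\xi')f_m(t,x,\xi')\,d\xi'$; the uniform bounds from Lemmas \ref{Lem3.1}--\ref{Lem2.4-1} and \ref{Lem4.2} are what make this passage to the limit legitimate.
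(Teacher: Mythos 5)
Your proposal is correct and follows essentially the same route as the paper: both verify the hypotheses of Schauder's fixed point theorem, obtaining compactness of $S$ from a uniform bound on $\partial_t u_m$ (the paper derives an $L^2$-in-time bound via an energy-type estimate and invokes the Aubin--Lions lemma, while you bound $\dot\alpha_{im}$ pointwise from the Galerkin ODE and use Arzel\`a--Ascoli, which amounts to the same thing in the finite-dimensional space $X_m$), and both leave the sequential continuity of $S$ as a routine verification (the paper cites \cite{Legar,Yu2}). Your explicit treatment of the self-map property, choosing $T_m$ uniformly over $\mathbf{A}$ using the uniformity of the constant in \eqref{E3.33--}, is if anything more careful than the paper's.
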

\begin{proof} Thanks to \eqref{local estimate for NS equations}, $u_m$ is bounded in the set $\mathbf{A}.$
Meanwhile, taking $\varphi$ in \eqref{E3.14} to be $\varphi_i$, multiplying the equation by $\alpha_{im}^\prime(t)$,  then summing the resulting equality   from $i=1$ to $m$,    we have
\beq\notag
\begin{split}
&\int_{\T^3}|\partial_t u_m|^2dx +\frac{1}{2}\frac{d}{dt}\int_{\T^3} |\nabla u_m|^2 dx =-\int_{\mathbb{T}^3}\left(\int_{\mathbb{R}^3}f_m(\tilde{u}-\xi)d\xi\right) \cdot \partial_t u_m dx\\
&\quad\quad\quad \quad\quad\quad \quad\quad\quad \quad\quad\quad \quad\quad\quad -\int_{\T^3} ((\tilde{u}\cdot \nabla)u_m)\cdot \partial_t u_m dx\\
&\leq \|\partial_t u_m \|_{L^\infty(\T^3)}\cdot\|\tilde{u}\|_{L^2(\T^3)}\cdot\|\nabla u_m\|_{L^2(\T^3)}
+\|\partial_t u_m \|_{L^\infty(\T^3)}\cdot \left\|\int_{\R^3}f_m(\tilde{u}-\xi) d\xi\right\|_{L^{1}(\T^3)}\\
&\leq \frac{1}{2}\int_{\T^3}|\partial_t u_m|^2dx+C(m) \left\|\int_{\R^3}f_m(\tilde{u}-\xi) d\xi\right\|_{L^{1}(\T^3)}^2 +C(m)\|\tilde{u}\|_{L^2(\T^3)}^2\|\nabla u_m\|_{L^2(\T^3)}^2.
\end{split}
\eeq
Note that all norms in $X_m$ are equivalent to each other,  so by  \eqref{E3.33--} and \eqref{local estimate for NS equations}  one obtains that
\beq
\begin{split}
\label{local estimate for Aubin's lemma}
\int_0^t\int_{\T^3}|\partial_t u_m|^2dxds \leq C(m,\lambda,T, M_3 f^\epi_0,\|f^\epi_0\|_{L^\infty\cap L^1(\T^3\times \R^3)},\|\tilde{u}\|_{L^\infty(0,T;L^2(\T^3))}),
\end{split}
\eeq
for any $t\in[0,T_m].$
  Thanks to \eqref{local estimate for NS equations}$_2$ and \eqref{local estimate for Aubin's lemma}, the Aubin-Lions  Lemma,  $u_m=S(\tilde{u})$ is compact in $\mathbf{A}$.  On the other hand, it is easy to verify that $S$ is sequentially continuous, see \cite{Legar,Yu2} for the details.  Thus  Schauder's  fixed point   theorem gives that  $S$ has a fixed point $u_m$ in $\mathbf{A}.$
\end{proof}

%With the above lemma at hand, we are ready to establish the energy estimate on $(u,f)$ for the whole system, which can ensure us to extend $T_m$ to $T$.
%Here we derive the following energy estimate in Lemma.

%\begin{lemma}\label{lemma3.4--}
%The operator $S$ has a fixed point in $\mathbf{A}$, that is, there is  a point $u_m\in \mathbf{A}$ such that $Su_m=u_m=\tilde{u}$.
%\end{lemma}
%\begin{proof} By \eqref{E3.35}, $u_m$ is bounded in $C([0,T_m],L^2(\T^3))\cap L^2(0,T;H^1(\T^2))$, thus it is in the set $\mathbf{A}.$
%From  \eqref{E3.14}, we deduce that $\partial_t u_m$ is bounded in $L^2(0,T;W^{-1,3}(\T^3)).$
%Then by Aubin-Lions  lemma, we have $u_m=S(\tilde{u})$ is compact in $Y_m=C([0,T];X_m)$.  On the other hand, it is easy to verify that $S$ is sequentially continuous, see \cite{Legar,Yu2} for the details.  So Schauder's  fixed point   theorem will give  $S$ has a fixed point $u_m$ in $Y_m.$
%\end{proof}

We have shown that there exists   a pair $(f_m,u_m)$ on a short time interval $[0,T_m].$ In an effort to extend $T_m$ to $T$, we rely on the following  uniform bounds on $u_m$.
\begin{lemma}\label{lemma3.5}
For any $t\in [0,T_m],$  we have
\begin{equation}\label{E3.37-}
\begin{split}
&\frac{1}{2}\int_{\mathbb{T}^3}|u_m|^2dx+\int_{\mathbb{T}^3}\int_{\mathbb{R}^3}f_m(1+\frac{1}{2}|\xi|^2)d\xi dx+\int_0^t \int_{\mathbb{T}^3}|\nabla_x u_m|^2dx ds\\
&+\int_0^t \int_{\mathbb{T}^3}\int_{\mathbb{R}^3}f_m|u_m-\xi|^2d\xi dx  ds\\
\leq &\frac{1}{2}\int_{\mathbb{T}^3}|u_0^\epi|^2dx+\int_{\mathbb{T}^3}\int_{\mathbb{R}^3}f_0^\epi(1+\frac{1}{2}|\xi|^2)d\xi dx.
\end{split}
\end{equation}
\end{lemma}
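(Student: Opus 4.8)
The plan is to establish the energy inequality \eqref{E3.37-} for the approximate solution $(f_m,u_m)$ by essentially reproducing, at the Galerkin level, the computation of Lemma \ref{Lem 2.1}, keeping careful track of the fact that $u_m$ lives in the finite-dimensional space $X_m$ and that $\tilde u = u_m$ at the fixed point obtained in Lemma \ref{lemma3.4}. First I would record the kinetic identity: since $f_m$ is the weak solution of \eqref{E3.3-1} constructed in the previous subsection and (after the fixed point) $\tilde u = u_m$, I multiply \eqref{E3.3-1}$_1$ by $1+\tfrac12|\xi|^2$ and integrate over $\mathbb{T}^3\times\mathbb{R}^3$. Because $f_m$ is only a weak solution, this step should be phrased via the weak formulation \eqref{E3.17} with the (smooth, compactly supported in $\xi$ thanks to the support assumption on $f_0^\epsilon$ propagating along characteristics) test function $\phi = 1+\tfrac12|\xi|^2$ suitably truncated, or simply by using that $f_m^n$ is smooth, establishing the identity for each $f_m^n$ and passing to the limit using the bounds of Lemma \ref{Lem4.3-1}. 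This yields, exactly as in \eqref{E2.2},
\begin{equation*}
\begin{split}
&\frac{d}{dt}\int_{\mathbb{T}^3}\int_{\mathbb{R}^3}f_m(1+\tfrac12|\xi|^2)d\xi dx+\int_{\mathbb{T}^3}\int_{\mathbb{R}^3}f_m|u_m-\xi|^2d\xi dx\\
&=\int_{\mathbb{T}^3}\int_{\mathbb{R}^3}f_m(u_m-\xi)\cdot u_m\,d\xi dx-\lambda\int_{\mathbb{T}^3}\int_{\mathbb{R}^3}f_m(1+\tfrac12|\xi|^2)d\xi dx\\
&\quad+\lambda\int_{\mathbb{T}^3}\int_{\mathbb{R}^3}\int_{\mathbb{R}^3}T(\xi,\xi')f_m(t,x,\xi')\,d\xi'(1+\tfrac12|\xi|^2)d\xi dx,
\end{split}
\end{equation*}
and the crucial cancellation of the last two terms follows verbatim from the computation \eqref{aaaaa}, i.e. Fubini's theorem together with $|\xi|=|\xi'|$ from \eqref{E1.3} and the normalization \eqref{E1.4}.

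Next I would record the fluid identity. Here the key point is that $u_m = S(\tilde u)$ with $\tilde u = u_m$, so in the weak formulation \eqref{E3.14} the convection coefficient $\tilde u$ coincides with $u_m$. Taking $\varphi = \varphi_i$ in \eqref{E3.14}, multiplying by $\alpha_{im}(t)$ and summing from $i=1$ to $m$ gives
\begin{equation*}
\frac{1}{2}\frac{d}{dt}\int_{\mathbb{T}^3}|u_m|^2dx+\int_{\mathbb{T}^3}|\nabla u_m|^2dx = -\int_{\mathbb{T}^3}\int_{\mathbb{R}^3}f_m(u_m-\xi)\cdot u_m\,d\xi dx,
\end{equation*}
where the convective term $\int_{\mathbb{T}^3}(\tilde u\cdot\nabla)u_m\cdot u_m\,dx = \int_{\mathbb{T}^3}(u_m\cdot\nabla)u_m\cdot u_m\,dx$ vanishes by integration by parts since $\Dv u_m = 0$ on $\mathbb{T}^3$ (no boundary terms on the torus). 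Adding this to the kinetic identity, the terms $\pm\int_{\mathbb{T}^3}\int_{\mathbb{R}^3}f_m(u_m-\xi)\cdot u_m\,d\xi dx$ cancel, and after integration in time over $(0,t)$ for $t\in[0,T_m]$ one obtains \eqref{E3.37-} with the initial data $(f_0^\epsilon, u_0^\epsilon)$ on the right-hand side. Since every step is an equality up to this point, the statement is in fact an identity; it is written as an inequality only because the limiting procedure in $n$ (where $f_m$ arises as a weak limit of the increasing sequence $f_m^n$) may produce a loss in the dissipation terms $\int_0^t\int\int f_m|u_m-\xi|^2$ and $\int_0^t\int|\nabla u_m|^2$, which are weakly lower semicontinuous.

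The main obstacle I anticipate is the rigorous justification of the kinetic energy identity, because $f_m$ is only a weak solution and the natural test function $1+\tfrac12|\xi|^2$ is unbounded in $\xi$ and not admissible in \eqref{E3.17}. I would handle this by working at the level of the smooth approximants $f_m^n$ from \eqref{E3.4}: each $f_m^n$ is genuinely smooth and, by the compact-support propagation along characteristics (the support in $\xi$ stays in a bounded set on $[0,T]$ since the $\xi$-trajectories satisfy $\tfrac{d\xi}{d\tau} = \tilde u - \xi$ with $\tilde u \in Y_m$ bounded), the integration by parts in $\xi$ is legitimate; then one passes $n\to\infty$ using \eqref{E4.17-}--\eqref{E4.18}, \eqref{E4.12} and the monotone/weak convergence of $f_m^n$, obtaining the identity for $f_m$ with possibly a $\leq$ in the dissipation term $\int_0^t\int\int f_m|u_m-\xi|^2$. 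Combined with the (exact, finite-dimensional, hence rigorous) fluid identity, this gives \eqref{E3.37-}. A secondary technical point is the term $\int_{\mathbb{T}^3}\int_{\mathbb{R}^3}f_m u_m\cdot\xi\,d\xi dx$ appearing on both sides before cancellation: it must be shown finite, which follows from $u_m\in L^\infty(\mathbb{T}^3)$ (finite dimensions) together with \eqref{E3.30-!} controlling $\int_{\mathbb{R}^3}\xi f_m\,d\xi$ in $L^\infty(0,T;L^{3/2}(\mathbb{T}^3))$, so the product is integrable.
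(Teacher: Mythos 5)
Your proposal is correct and follows essentially the same route as the paper: the Galerkin energy identity for $u_m$ with $\tilde u=u_m$ at the fixed point, the kinetic energy balance obtained by testing the smooth iterates $f_m^n$ against $1+\tfrac12|\xi|^2$ with the breakup terms handled by Fubini together with \eqref{E1.3}--\eqref{E1.4}, passage to the limit $n\to\infty$ (which is where the inequality enters, in the paper via the monotonicity $f_m^{n-1}\le f_m^n$), and cancellation of the coupling terms upon adding the two identities.
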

\begin{proof}
Taking $\varphi$ in \eqref{E3.14} to be $\varphi_i$ and multiplying the equation by $\alpha_{im}$,   summing   from $i=1$ to $m$,  we have
\begin{equation}
\label{ssss}
\begin{split}
& \frac{1}{2}\int_{\T^3}|u_m|^2dx+\int_0^t\int_{\T^3}|\nabla u_m|^2dxds\\
=& \frac{1}{2}\int_{\T^3}|u_0^\epi|^2dx-\int_0^t\int_{\T^3}\int_{\R^3}f_m(\tilde{u}-\xi)\cdot u_md\xi dxds.\\
\end{split}
\end{equation}
Note that $\tilde{u}=u_m$ from Lemma \ref{lemma3.4}.  We are able to use $u_m$ to replace $\tilde{u}$ in the energy equality of the Navier-Stokes part \eqref{ssss} on a short  time $[0,T_m]$,  thus
\begin{equation}
\begin{split}
\label{E3.38}
& \frac{1}{2}\int_{\T^3}|u_m|^2dx+\int_0^t\int_{\T^3}|\nabla u_m|^2dxds\\
=& \frac{1}{2}\int_{\T^3}|u_0^\epi|^2dx-\int_0^t\int_{\T^3}\int_{\R^3}f_m(u_m-\xi)\cdot u_md\xi dxds.
\end{split}
\end{equation}

   Note that, $f_m^n$ given by \eqref{E3.4} is a smooth solution to the problem \eqref{E3.4-1}. We can multiply $1+\frac{1}{2}|\xi|^2$ on both sides of \eqref{E3.4-1} and integrate to deduce the  following equality
\begin{equation}
\begin{split}
\label{energy for kinetic part}
& \int_{\mathbb{T}^3}\int_{\mathbb{R}^3}f_m^n(1+\frac{1}{2}|\xi|^2)d\xi dx+\int_0^t\int_{\mathbb{T}^3}\int_{\mathbb{R}^3}f_m^n|\tilde{u}-\xi|^2d\xi dxds\\
=&\int_{\mathbb{T}^3}\int_{\mathbb{R}^3}f_0^\epi(1+\frac{1}{2}|\xi|^2)d\xi dx+\int_0^t\int_{\mathbb{T}^3}\int_{\mathbb{R}^3}f_m^n(\tilde{u}-\xi)\cdot \tilde{u}d\xi dxds-\lambda\int_0^t\int_{\mathbb{T}^3}\int_{\mathbb{R}^3}f_m^n(1+\frac{1}{2}|\xi|^2)d\xi dxds\\
&+ \lambda\int_0^t\int_{\mathbb{T}^3}\int_{\mathbb{R}^3}\int_{\mathbb{R}^3}T(\xi,\xi^\prime) f_m^{n-1}(t,x,\xi^\prime)d\xi^\prime(1+\frac{1}{2}|\xi|^2)d\xi dxds\\
=&\int_{\mathbb{T}^3}\int_{\mathbb{R}^3}f_0^\epi(1+\frac{1}{2}|\xi|^2)d\xi dx+\int_0^t\int_{\mathbb{T}^3}\int_{\mathbb{R}^3}f_m^n(\tilde{u}-\xi)\cdot \tilde{u}d\xi dxds-\lambda\int_0^t\int_{\mathbb{T}^3}\int_{\mathbb{R}^3}f_m^n(1+\frac{1}{2}|\xi|^2)d\xi dxds\\
&+ \lambda\int_0^t\int_{\mathbb{T}^3}\int_{\mathbb{R}^3} f_m^{n-1}(t,x,\xi)(1+\frac{1}{2}|\xi|^2)d\xi dxds,
\end{split}
\end{equation}
 where we have used the Fubini's theorem, \eqref{E1.3} and \eqref{E1.4} in the last equality of \eqref{energy for kinetic part}. Next,  by the convergence \eqref{E4.17-}-\eqref{E4.12}, we are able to pass to the limits in \eqref{energy for kinetic part} as $n\to\infty$. In fact, we can recover the following inequality
\begin{equation}\label{TTTT}
\begin{split}
& \int_{\mathbb{T}^3}\int_{\mathbb{R}^3}f_m(1+\frac{1}{2}|\xi|^2)d\xi dx+\int_0^t\int_{\mathbb{T}^3}\int_{\mathbb{R}^3}f_m|\tilde{u}-\xi|^2d\xi dxds\\
\leq &\int_{\mathbb{T}^3}\int_{\mathbb{R}^3}f_0^\epi(1+\frac{1}{2}|\xi|^2)d\xi dx+\int_0^t\int_{\mathbb{T}^3}\int_{\mathbb{R}^3}f_m(\tilde{u}-\xi)\cdot \tilde{u}d\xi dxds.
\end{split}
\end{equation}
Applying Lemma \ref{lemma3.4} to \eqref{TTTT}, one obtains
 \begin{equation}\label{E3.39}
\begin{split}
& \int_{\mathbb{T}^3}\int_{\mathbb{R}^3}f_m(1+\frac{1}{2}|\xi|^2)d\xi dx+\int_0^t\int_{\mathbb{T}^3}\int_{\mathbb{R}^3}f_m|u_m-\xi|^2d\xi dxds\\
\leq &\int_{\mathbb{T}^3}\int_{\mathbb{R}^3}f_0^\epi(1+\frac{1}{2}|\xi|^2)d\xi dx+\int_0^t\int_{\mathbb{T}^3}\int_{\mathbb{R}^3}f_m(u_m-\xi)\cdot u_m d\xi dxds.
\end{split}
\end{equation}
  Combining \eqref{E3.38} and \eqref{E3.39}, we have \eqref{E3.37-} for any $t\in[0,T_m].$

\end{proof}

Note that all norms in $X_m$ are equivalent, Lemma \ref{lemma3.5} yields a uniform estimate on $u_m$ as follows $$\sup\limits_{0\leq t\leq T_m}\|u_m\|_{X_m}\leq C(m)<\infty.$$
 It allows us to have $T_m=T.$ Thus, the solution $(u_m,f_m)$ exists on $[0,T]$. Hence, we have the following result on the global existence of weak solutions of the approximation system:
\begin{proposition}\label{Pro3.1}
For any  $T>0$, under the assumptions of Theorem \ref{Thm 1}, there exists a  weak solution $(u_m, f_m)$  of the following problem
\begin{equation}\label{E3.18}
\left\{\begin{array}{l}
\partial_t f_m
  +
  \xi\cdot \nabla_x f_m+{\rm div}_\xi((u_m-\xi)f_\epsilon)=-\lambda f_m+\lambda\int_{\mathbb{R}^3}T(\xi,\xi^\prime)f_m(t,x,\xi^\prime)d\xi^\prime,\\[2mm]
\partial_t u_m
 +(u_m\cdot \nabla_x)u_m +\nabla_x P_m- \Delta_x u_m=-\int_{\mathbb{R}^3}(u_m-\xi)f_m  d\xi,    \\[2mm]
 {\rm div}u_m=0,  \ \ \ \   \qquad x\in \mathbb{T}^3,\ \xi\in \mathbb{R}^3, \  t\in(0,T), \\[2mm]
 \end{array}
        \right.
\end{equation}
with initial data
\begin{equation}\label{E3.19-}
(f_m,u_m)|_{t=0}=(f_0^\epsilon(x,\xi),u_0^\epsilon(x)), \ \ \  x\in \mathbb{T}^3,\ \xi\in \mathbb{R}^3.
\end{equation}
Moreover, for any $0\leq t\le T$, the weak solution   $(u_m, f_m)$ satisfies the following energy inequality
\begin{equation}\label{E3.19}
\begin{split}
&\frac{1}{2}\int_{\mathbb{T}^3}|u_m|^2dx+\int_{\mathbb{T}^3}\int_{\mathbb{R}^3}f_m(1+\frac{1}{2}|\xi|^2)d\xi dx+\int_0^T \int_{\mathbb{T}^3}|\nabla_x u_m|^2dx ds\\
&+\int_0^T \int_{\mathbb{T}^3}\int_{\mathbb{R}^3}f_m|u_m-\xi|^2d\xi dx  ds\\
\leq & \frac{1}{2}\int_{\mathbb{T}^3}|u_0^\epi|^2dx+\int_{\mathbb{T}^3}\int_{\mathbb{R}^3}f_0^\epi(1+\frac{1}{2}|\xi|^2)d\xi dx,
\end{split}
\end{equation}
 and
\beq\label{E3.20}
\|f_m\|_{ L^\infty(0,T;L^p(\R^3\times\T^3))}\leq C,\ for\ any\ 1\leq p\leq \infty,
\eeq
where $C>0$ depends only on the initial data, $\lambda$ and $T$.
\end{proposition}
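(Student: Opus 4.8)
The plan is to stitch together the pieces already established in Section \ref{S3} and then globalize in time. First I would invoke the kinetic existence lemma: for a fixed $\tilde{u}\in\mathbf{A}$, the problem \eqref{E3.3-1} admits a weak solution $f_m$ obtained as the increasing limit of the iterates $f_m^n$ from \eqref{E3.4-1}, satisfying the weak formulation \eqref{E3.17} and the bounds \eqref{E3.27!}--\eqref{E3.31!}. Next, I would use the fact established in the Navier--Stokes subsection that the right-hand side $-\int_{\R^3}f_m(\tilde u-\xi)\,d\xi$ is controlled in $L^\infty(0,T;L^{3/2}(\T^3))$ (estimate \eqref{E3.33--}), so the Galerkin ODE system \eqref{E3.34} has a unique solution $u_m=S(\tilde u)\in Y_m$ on a maximal interval $[0,T_m]$, and, crucially, Lemma \ref{lemma3.4} (Schauder's fixed point theorem applied to $S:\mathbf{A}\to\mathbf{A}$) produces $\tilde u=u_m$. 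This closes the coupling: the pair $(f_m,u_m)$ now solves \eqref{E3.18} with $\tilde u$ replaced by $u_m$, and satisfies both weak formulations (the kinetic one \eqref{E3.17} with $\tilde u=u_m$, and the Navier--Stokes one \eqref{E3.14}).

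The second part is the energy inequality \eqref{E3.19} and the global existence. Lemma \ref{lemma3.5} gives \eqref{E3.37-} on the short interval $[0,T_m]$: the derivation adds the Navier--Stokes energy identity \eqref{E3.38} to the kinetic energy inequality \eqref{E3.39}, the two cross terms $\int_0^t\int_{\T^3}\int_{\R^3}f_m(u_m-\xi)\cdot u_m\,d\xi\,dx\,ds$ cancelling exactly. From \eqref{E3.37-}, since $f_m\ge0$ and $1+\tfrac12|\xi|^2\ge0$, one reads off $\tfrac12\|u_m(t)\|_{L^2(\T^3)}^2\le \tfrac12\|u_0^\epsilon\|_{L^2}^2+\int_{\T^3}\int_{\R^3}f_0^\epsilon(1+\tfrac12|\xi|^2)\,d\xi\,dx$, a bound independent of $t$ and of $m$ (using that $\|u_0^\epsilon\|_{L^2}$ and $M_0f_0^\epsilon+M_2f_0^\epsilon$ are uniformly bounded by the choice of approximate data). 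Because all norms on the finite-dimensional space $X_m$ are equivalent, this uniform $L^2$ bound forces $\sup_{0\le t\le T_m}\|u_m\|_{X_m}\le C(m)<\infty$, which by the standard continuation principle for ODEs rules out blow-up of the coefficients $\alpha_{im}$ at $T_m$; hence $T_m=T$. Running the whole argument on $[0,T]$ then yields \eqref{E3.19} on $[0,T]$, and \eqref{E3.20} is just \eqref{E3.27!} restated.

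The main obstacle — really the only nontrivial point, since everything else is bookkeeping — is the passage to the limit $n\to\infty$ that underlies both the weak formulation \eqref{E3.17} and the kinetic energy inequality \eqref{E3.39}. The linear terms in \eqref{E3.15} pass to the limit by the weak-$*$ convergences \eqref{E4.17-}, \eqref{E4.17}, \eqref{E4.18} of the moments and by \eqref{E4.12} for the collision term $\int_{\R^3}T(\xi,\xi')f_m^n\,d\xi'$; the subtlety is that \eqref{E4.12} itself rests on the monotonicity of $\{f_m^n\}$ together with the self-similarity bound \eqref{E2.4} of Lemma \ref{Lem2.2}, and that in the energy identity \eqref{energy for kinetic part} the quadratic drag term $\int_0^t\int f_m^n|\tilde u-\xi|^2$ is only lower semicontinuous under weak convergence, which is exactly why one ends with an inequality "$\le$" in \eqref{E3.39} rather than an equality — the reverse estimate is not available and is not needed. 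I would therefore present the limit passage carefully: monotone convergence $f_m^n\uparrow f_m$ a.e. (hence the weak-$*$ limits are the genuine moments of $f_m$), Fubini plus \eqref{E1.3}--\eqref{E1.4} to handle the $T(\xi,\xi')$ integrals as in \eqref{aaaaa}, and Fatou/weak lower semicontinuity for the dissipation terms. Once this is done the Proposition follows.
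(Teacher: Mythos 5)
Your proposal is correct and follows essentially the same route as the paper: the iteration \eqref{E3.4-1} with monotone limits and moment bounds for the kinetic part, the Galerkin system \eqref{E3.34} plus Schauder's fixed point theorem (Lemma \ref{lemma3.4}) to close the coupling $\tilde u=u_m$, the energy inequality by adding \eqref{E3.38} to \eqref{E3.39} with the cross terms cancelling, and continuation to $T_m=T$ via the equivalence of norms on $X_m$. Your discussion of the $n\to\infty$ passage (monotone convergence, Fubini with \eqref{E1.3}--\eqref{E1.4}, Fatou for the drag term, hence the inequality in \eqref{E3.39}) matches the paper's treatment and, if anything, makes the lower-semicontinuity point more explicit than the paper does.
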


\bigbreak
\section{Recover the weak solutions}
The main goal of this section is to recover the weak solutions of problem \eqref{E1.1}-\eqref{E1.2} by passing to the limit in the sequence  $(u_m,f_m)$ which was constructed in
Proposition \ref{Pro3.1}.
In particular,
we shall pass to the limits as $m$ goes to infinity and $\epi$ tends to zero, and show that
the limit function is a weak solution of problem \eqref{E1.1}-\eqref{E1.2}.
In the following, we will investigate the weak limit
with respect to $m$ in Step 1 and pass to the limit with respect to $\epi$ in
Step 2.

\bigbreak

\textbf{Step 1. Passing to the limit as $m\rightarrow \infty$.}

In this step, we keep $\epi>0$ fixed, deducing from Proposition \ref{Pro3.1}, we have the following estimates independent of $m$:
\beq\label{E4.1}
\|f_m\|_{ L^\infty(0,T;L^p(\T^3\times\R^3))}\leq C,\text{ for any}\ 1\leq p\leq  \infty,
\eeq
\beq\label{E4.4}
M_2f_m(t)\leq C,\text{ for any}\ 0\leq t\leq T,
\eeq
\beq\label{E4.2-}
\|u_m\|_{ L^\infty(0,T;L^2(\T^3))}\leq C,
\eeq
\beq\label{E4.3}
\|u_m\|_{ L^2(0,T;H^1(\T^3))}\leq C.
\eeq
Meanwhile, as the same in Section \ref{S3}, the solution satisfies the following estimate:
\beq\label{E3.28--}
\left\|\int_{\mathbb{R}^3}T(\xi,\xi^\prime)f_m(t,x,\xi^\prime)d\xi^\prime\right\|_{\ L^\infty(0,T;L^p(\T^3\times\R^3))}\leq C,\text{ for any } 1\leq p\leq \infty.
\eeq
Furthermore, by \eqref{E4.1}-\eqref{E4.3} and Lemmas \ref{Lem2.3} and Lemma \ref{Lem2.4-1}, any solution satisfies the following uniform bounds
\beq\label{E4.39--}
M_3f_m(t)\leq C,\text{ for any}\ 0\leq t\leq T,
\eeq
\beq\label{E4.39-}
\left\|\int_{\R^3} f_m d \xi\right\|_{L^\infty(0,T;L^2(\T^3))}\leq C,
\eeq
\beq\label{E4.40-}
\left\|\int_{\R^3}\xi f_m d \xi\right\|_{L^\infty(0,T;L^\frac{3}{2}(\T^3))}\leq C,
\eeq
and
\beq\label{E4.41-}
\left\|\int_{\R^3}|\xi|^2 f_m d \xi\right\|_{L^\infty(0,T;L^\frac{6}{5}(\T^3))}\leq C.
\eeq

\bigbreak

With the above estimates \eqref{E4.1}-\eqref{E4.41-}, we are ready to investigate the limit  as $m$ goes to infinity. To this end, we shall rely on the Aubin-Lions lemma for the Navier-Stokes part and the $L^p$ average velocity lemma for the kinetic part.
Using the same arguments as that in \cite{Boudin}, we prove that $\partial_t u_m$ is bounded in ${L^{\frac{4}{3}}(0,T;H^{-1}(\mathbb{T}^3))}$. For   completeness, we give the proof as follows.
\begin{lemma}\label{Lem4.4}
For any $m$, it holds that
\beq\label{E4.29}
\|\partial_t u_m\|_{L^{\frac{4}{3}}(0,T;H^{-1}(\mathbb{T}^3))}\leq C.
\eeq
\end{lemma}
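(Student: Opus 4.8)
The plan is to read the time derivative off the Galerkin equation \eqref{E3.14} and estimate the right-hand side in $H^{-1}(\T^3)$ term by term. Let $P_m$ be the $L^2$-orthogonal projection onto $X_m$ (with the basis $\{\phi_i\}$ taken, as is standard, so that $P_m$ is bounded on $H^1(\T^3)$ uniformly in $m$; see \cite{Boudin}). Since $u_m\in X_m$, for any divergence-free $\varphi\in H^1(\T^3)$ one has $\langle\partial_t u_m,\varphi\rangle=\langle\partial_t u_m,P_m\varphi\rangle$, so from \eqref{E3.14} it suffices to bound the three contributions in
\[
\partial_t u_m=P_m\Big(\Delta u_m-(u_m\cdot\nabla)u_m-\int_{\R^3}(u_m-\xi)f_m\,d\xi\Big)
\]
separately in $L^{\frac43}(0,T;H^{-1}(\T^3))$, using $\|P_m\psi\|_{H^1}\le C\|\psi\|_{H^1}$ to discard $P_m$ when pairing against test functions.

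First, the viscous term: $\|P_m\Delta u_m\|_{H^{-1}}\le C\|\nabla u_m\|_{L^2(\T^3)}$, which by \eqref{E4.3} is bounded in $L^2(0,T)\hookrightarrow L^{\frac43}(0,T)$. Second, the convective term. Since $\Dv u_m=0$, we write $(u_m\cdot\nabla)u_m=\Dv(u_m\otimes u_m)$, hence $\|P_m(u_m\cdot\nabla)u_m\|_{H^{-1}}\le C\|u_m\otimes u_m\|_{L^2}\le C\|u_m\|_{L^4(\T^3)}^2$. The three-dimensional Gagliardo--Nirenberg inequality gives $\|u_m\|_{L^4}\le C\|u_m\|_{L^2}^{1/4}\|u_m\|_{H^1}^{3/4}$, so that, using \eqref{E4.2-}, $\|u_m\|_{L^4}^{8/3}\le C\|u_m\|_{L^2}^{2/3}\|u_m\|_{H^1}^2\le C\|u_m\|_{H^1}^2$, which is integrable in $t$ by \eqref{E4.3}; thus the convective term lies in $L^{\frac43}(0,T;H^{-1}(\T^3))$. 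This is exactly the term that forces the time exponent $\frac43$.

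Third, the drag term. Split $\int_{\R^3}(u_m-\xi)f_m\,d\xi=u_m\int_{\R^3}f_m\,d\xi-\int_{\R^3}\xi f_m\,d\xi$. By Sobolev embedding together with \eqref{E4.2-} and \eqref{E4.3}, $u_m$ is bounded in $L^2(0,T;L^6(\T^3))$; combining with the bound \eqref{E4.39-} on $\int_{\R^3}f_m\,d\xi$ in $L^\infty(0,T;L^2(\T^3))$ and H\"older's inequality ($\tfrac16+\tfrac12=\tfrac23$) shows $u_m\int_{\R^3}f_m\,d\xi$ is bounded in $L^2(0,T;L^{3/2}(\T^3))$, while \eqref{E4.40-} bounds $\int_{\R^3}\xi f_m\,d\xi$ in $L^\infty(0,T;L^{3/2}(\T^3))$. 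Since on the torus $L^{3/2}(\T^3)\hookrightarrow L^{6/5}(\T^3)\hookrightarrow H^{-1}(\T^3)$, the drag term is bounded in $L^2(0,T;H^{-1}(\T^3))\hookrightarrow L^{\frac43}(0,T;H^{-1}(\T^3))$. Adding the three estimates yields \eqref{E4.29}.

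The only genuinely delicate points are the uniform $H^1$-boundedness of the Galerkin projection $P_m$, which is handled exactly as in \cite{Boudin}, and the time-integrability bookkeeping for the convective term, which is the reason one obtains $H^{-1}$-regularity of $\partial_t u_m$ only with the time exponent $\frac43$ rather than $2$. Everything else is a direct application of the uniform bounds \eqref{E4.1}--\eqref{E4.41-} already established from the energy inequality and the moment estimates.
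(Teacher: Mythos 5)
Your proof is correct and follows essentially the same route as the paper's: read $\partial_t u_m$ off the Galerkin equation and bound the viscous, convective, and drag contributions in $H^{-1}(\mathbb{T}^3)$ by duality, with the interpolation $\|u_m\|_{L^{8/3}(0,T;L^4)}\leq C$ (equivalently your Gagliardo--Nirenberg step) supplying the exponent $\tfrac43$ and the moment bounds \eqref{E4.39-}--\eqref{E4.40-} handling the drag term. Your explicit treatment of the uniform $H^1$-boundedness of the projection $P_m$ is a point the paper leaves implicit, but it does not change the argument.
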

\bp Note that $$\partial_t u_m=-
 (u_m\cdot \nabla_x)u_m-\nabla_x P_m+\Delta_x u_m+\int_{\mathbb{R}^3}(u_m-\xi)f_m  d\xi. $$
 We control the first term
\beq\label{E4.30}
\int_0^{T}\int_{\mathbb{T}^3} (u_m\cdot \nabla) u_m\cdot\varphi dxds=-\int_0^{T}\int_{\mathbb{T}^3} (u_m\cdot \nabla)\varphi\cdot u_m dxds.
\eeq
By \eqref{E4.2-}-\eqref{E4.3} and an interpolation inequality, one obtains
\beq\label{E4.31}
\|u_m\|_{L^{\frac{8}{3}}(0,T;L^4(\T^3))}\leq C.
\eeq
From \eqref{E4.30} and \eqref{E4.31}, we deduce that
\beq
\left|\int_0^{T}\int_{\mathbb{T}^3}( u_m\cdot \nabla) u_m\cdot\varphi dxds\right|\leq C \|\nabla \varphi\|_{L^{4}(0,T;L^2(\T^3))}.
\eeq
The second term vanishes if $\Dv\varphi=0,$ and the third term is controlled by
$$\int_0^T\int_{\mathbb{T}^3}\Delta u_m \varphi\,dx\,dt=-\int_0^T\int_{\mathbb{T}^3}\nabla u_m :\nabla\varphi\,dx\,dt,$$
which is bounded by
$C\|\nabla\varphi\|_{L^2(0,T;L^2(\T^3))}.$

Next, for the last term, by H\"{o}lder inequality, we have
\beq
\begin{split}
&\left|\int_0^{T}\int_{\mathbb{T}^3}\int_{\mathbb{R}^3}f_m(u_m-\xi)\cdot \varphi d\xi dx ds\right|\\
\leq & C\|u_m\|_{L^{2}(0,T;L^6(\T^3))}\|\varphi\|_{L^{2}(0,T;L^6(\T^3))}\|m_0f_m\|_{L^{\infty}(0,T;L^{\frac{3}{2}}(\T^3))}\\
& +  C\|\varphi\|_{L^{2}(0,T;L^5(\T^3))}\|m_1f_m\|_{L^{2}(0,T;L^{\frac{5}{4}}(\T^3))},
\end{split}
\eeq
which implies that
$
\int_{ \mathbb{R}^3}f_m(u_m-\xi)d\xi
$
is uniformly bounded in  $L^{2}(0,T;H^{-1}(\T^3))$.  This completes the proof of Lemma  \ref{Lem4.4}.

\ep

\noindent  With \eqref{E4.3} and Lemma \ref{Lem4.4}, the Aubin-Lions Lemma yields
\begin{equation}
\label{strong convergence of velocity}
u_m\to u\quad\text{ strongly in } L^{\infty}(0,T;L^r(\mathbb{T}^3))
\end{equation}
for any $1<r\leq 6.$

 By \eqref{E4.1}, \eqref{E4.2-}-\eqref{E4.3},  as $m\rightarrow \infty,$ we have
\beq\label{E3.37}
\begin{split}
& f_m \rightharpoonup f, \ weakly(*)\ in\  L^\infty(0,T;L^q(\T^3\times\R^3),\ for\ any\ q>1;\\
& u_m \rightharpoonup u, \ \ weakly(*)\ in\   L^\infty(0,T;L^2(\T^3))\cap  L^2(0,T;H^1(\T^3)).
\end{split}
\eeq
Thanks to \eqref{E3.28--} and \eqref{E3.37}$_1$, we employ the same arguments as in \eqref{E4.12}, to have,  for any $q>1$, as $m\rightarrow \infty, $
\beq\label{E4.43-}
\int_{\mathbb{R}^3}T(\xi,\xi^\prime)f_m(t,x,\xi^\prime)d\xi^\prime \rightharpoonup \int_{\mathbb{R}^3}T(\xi,\xi^\prime)f(t,x,\xi^\prime)d\xi^\prime, \ weakly(*)\ in\  L^\infty(0,T;L^q(\T^3\times\R^3)).\\
\eeq

\noindent Furthermore,  by \eqref{E4.39-}-\eqref{E4.41-} and \eqref{E3.37}$_1$,  we have
\beq \label{E4.52}
\int_{\R^3}f_m d \xi \rightharpoonup  \int_{\R^3} f d\xi,\ weakly(*) \ in\ L^\infty(0,T;L^2(\T^3)),
\eeq
\beq\label{E4.53}
\int_{\R^3}\xi f_m d \xi \rightharpoonup  \int_{\R^3}\xi fd\xi,\ weakly(*) \ in\ L^\infty(0,T;L^\frac{3}{2}(\T^3)),
\eeq
and
\beq\label{E4.54}
\int_{\R^3}|\xi|^2 f_m d  \xi \rightharpoonup  \int_{\R^3}|\xi|^2 fd\xi,\ weakly(*) \ in\ L^\infty(0,T;L^\frac{6}{5}(\T^3)),
\eeq
as $m\rightarrow \infty.$

  \noindent Thus, we are ready to investigate the limits as $m$ goes to infinity. In particular, with \eqref{strong convergence of velocity}-\eqref{E3.37},  as $m\to\infty,$ from the weak formulation:
\begin{equation}\label{E4.37}
\begin{split}
&\int_0^{T}\int_{\mathbb{T}^3}(-u_m\cdot \partial_t\varphi+(u_m\cdot \nabla) u_m\cdot \varphi+\nabla u_m: \nabla \varphi)dxds\\
&=-\int_0^{T}\int_{\mathbb{T}^3}\int_{\mathbb{R}^3}f_m(u_m-\xi)\cdot \varphi d\xi dx ds+\int_{\T^3} u^\epi_0\cdot \varphi(0,\cdot)dx,
\end{split}
\end{equation}we can recover the following ones:
\begin{equation}
\begin{split}
\label{weak formulation for NS part}
&\int_0^{T}\int_{\mathbb{T}^3}(-u\cdot \partial_t\varphi+(u\cdot \nabla) u\cdot \varphi+\nabla u: \nabla \varphi)dxds\\
&=-\int_0^{T}\int_{\mathbb{T}^3}\int_{\mathbb{R}^3}f(u-\xi)\cdot \varphi d\xi dx ds+\int_{\T^3} u^\epi_0\cdot \varphi(0,\cdot)dx,
\end{split}
\end{equation}
for any test function $\varphi\in C^\infty([0,T]\times \mathbb{T}^3)$ and ${\rm div}\varphi=0$.

 Similarly, thanks to \eqref{strong convergence of velocity}-\eqref{E4.53}, as $m\to\infty,$ from the weak formulation of the  kinetic equation
\begin{equation}\label{E4.38-}
\begin{split}
&-\int_0^{T}\int_{\mathbb{T}^3}\int_{\mathbb{R}^3}f_m(\phi_t+\xi\cdot \nabla_x \phi +(u_m-\xi)\cdot \nabla_{\xi}\phi)d\xi dx ds\\
=&\int_{\mathbb{T}^3}\int_{\mathbb{R}^3}f^\epi_0\phi(0,x,\xi)d\xi dx
+\lambda\int_0^{T}\int_{\mathbb{T}^3}\int_{\mathbb{R}^3}f_m\phi d\xi dx ds\\
&
-\lambda\int_0^{T}\int_{\mathbb{T}^3}\int_{\mathbb{R}^3}\int_{\mathbb{R}^3}T(\xi,\xi^\prime)f_m(t,x,\xi^\prime)\phi d\xi d\xi^\prime dx ds,
\end{split}
\end{equation}
we can recover
\begin{equation}\label{E4.38}
\begin{split}
&-\int_0^{T}\int_{\mathbb{T}^3}\int_{\mathbb{R}^3}f(\phi_t+\xi\cdot \nabla_x \phi +(u-\xi)\cdot \nabla_{\xi}\phi)d\xi dx ds\\
=&\int_{\mathbb{T}^3}\int_{\mathbb{R}^3}f^\epi_0\phi(0,x,\xi)d\xi dx
+\lambda\int_0^{T}\int_{\mathbb{T}^3}\int_{\mathbb{R}^3}f\phi d\xi dx ds\\
&
-\lambda\int_0^{T}\int_{\mathbb{T}^3}\int_{\mathbb{R}^3}\int_{\mathbb{R}^3}T(\xi,\xi^\prime)f(t,x,\xi^\prime)\phi d\xi d\xi^\prime dx ds,
\end{split}
\end{equation}

\noindent for any test function  $\phi\in C^\infty([0,T]\times\mathbb{T}^3\times\mathbb{R}^3)$.\\

\noindent The last task of this step is to study the limit of the energy inequality as  $m$ goes to infinity. In particular, we can state this limit in the following lemma.
 \begin{lemma}
 \label{lemma of energy inequality}
 The following energy inequality holds as $m\to\infty$:
\begin{equation}\label{E3.19--}
\begin{split}
&\frac{1}{2}\int_{\mathbb{T}^3}|u|^2dx+\int_{\mathbb{T}^3}\int_{\mathbb{R}^3}f(1+\frac{1}{2}|\xi|^2)d\xi dx+\int_0^T \int_{\mathbb{T}^3}|\nabla_x u|^2dx ds\\
&+\int_0^T \int_{\mathbb{T}^3}\int_{\mathbb{R}^3}f|u-\xi|^2d\xi dx  ds\\
\leq &\frac{1}{2}\int_{\mathbb{T}^3}|u_0^\epi|^2dx+\int_{\mathbb{T}^3}\int_{\mathbb{R}^3}f_0^\epi(1+\frac{1}{2}|\xi|^2)d\xi dx.
\end{split}
\end{equation}
\end{lemma}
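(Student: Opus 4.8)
The plan is to pass to the limit in the energy inequality \eqref{E3.19} established in Proposition \ref{Pro3.1} by combining the weak lower semicontinuity of the various quadratic terms with the strong convergence of $u_m$. First I would note that since $u_m\rightharpoonup u$ weakly$(*)$ in $L^\infty(0,T;L^2(\T^3))\cap L^2(0,T;H^1(\T^3))$, the map $v\mapsto \frac12\int_{\T^3}|v|^2\,dx$ is weakly lower semicontinuous in $L^2$, so $\frac12\int_{\T^3}|u|^2\,dx\leq \liminf_m\frac12\int_{\T^3}|u_m|^2\,dx$; the same applies to $\int_0^T\int_{\T^3}|\nabla_x u|^2\,dx\,ds\leq\liminf_m\int_0^T\int_{\T^3}|\nabla_x u_m|^2\,dx\,ds$ using the weak convergence of $\nabla u_m$ in $L^2((0,T)\times\T^3)$. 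For the kinetic potential and kinetic energy terms I would use \eqref{E3.37}$_1$ together with \eqref{E4.52} and \eqref{E4.54}: $\int_{\R^3}f_m\,d\xi\rightharpoonup\int_{\R^3}f\,d\xi$ and $\int_{\R^3}|\xi|^2 f_m\,d\xi\rightharpoonup\int_{\R^3}|\xi|^2 f\,d\xi$, both weakly$(*)$ in the relevant $L^\infty_tL^p_x$ spaces, so after integrating in $x$ these two terms simply pass to the limit as equalities; alternatively, by Fatou combined with weak convergence of the nonnegative measure $f_m\,d\xi\,dx$ one gets $\leq$, which is all that is needed on the left-hand side. On the right-hand side, $u_0^\epi$ and $f_0^\epi$ do not depend on $m$, so that side is untouched by the limit $m\to\infty$.

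The main obstacle is the drag dissipation term $\int_0^T\int_{\T^3}\int_{\R^3}f_m|u_m-\xi|^2\,d\xi\,dx\,ds$, which is a product of two sequences converging only weakly. I would expand it as
\begin{equation*}
\int f_m|u_m-\xi|^2\,d\xi\,dx\,ds=\int |u_m|^2 m_0 f_m\,dx\,ds-2\int u_m\cdot\Big(\int_{\R^3}\xi f_m\,d\xi\Big)\,dx\,ds+\int m_2 f_m\,dx\,ds,
\end{equation*}
and treat the three pieces separately. For the middle term, the strong convergence $u_m\to u$ in $L^\infty(0,T;L^r(\T^3))$ (any $1<r\le 6$, from \eqref{strong convergence of velocity}) against the weak$(*)$ convergence $\int_{\R^3}\xi f_m\,d\xi\rightharpoonup\int_{\R^3}\xi f\,d\xi$ in $L^\infty(0,T;L^{3/2}(\T^3))$ from \eqref{E4.53} passes to the limit directly, since $3/2$ and a conjugate-type exponent below $3$ pair up with room to spare after integration in time. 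For the last term, \eqref{E4.54} integrated in $x$ gives convergence of $\int m_2 f_m\,dx\,ds$ to $\int m_2 f\,dx\,ds$ (or, if one only wants an inequality, weak convergence of $f_m\,d\xi\,dx$ and Fatou give $\liminf$). The first term $\int|u_m|^2 m_0 f_m$ is the genuinely delicate one: here I would use that $u_m\to u$ strongly in $L^\infty(0,T;L^4(\T^3))$, hence $|u_m|^2\to|u|^2$ strongly in, say, $L^\infty(0,T;L^2(\T^3))$, while $m_0 f_m=\int_{\R^3}f_m\,d\xi\rightharpoonup m_0 f$ weakly$(*)$ in $L^\infty(0,T;L^2(\T^3))$ by \eqref{E4.52}; the product of a strongly convergent sequence in $L^\infty_tL^2_x$ and a weak$(*)$ convergent one in $L^\infty_tL^2_x$ converges (after the $x$-integration and then in time, using that $1<r\le6$ gives enough integrability margin, and the test against the constant-in-$t$, constant-in-$x$ function $1$ on the bounded torus), so this term passes to the limit as an equality. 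Combining, the whole drag term converges, in fact,
\begin{equation*}
\int_0^T\!\!\int_{\T^3}\!\!\int_{\R^3}f_m|u_m-\xi|^2\,d\xi\,dx\,ds\ \longrightarrow\ \int_0^T\!\!\int_{\T^3}\!\!\int_{\R^3}f|u-\xi|^2\,d\xi\,dx\,ds,
\end{equation*}
so on the left-hand side of \eqref{E3.19} we may replace each term by its limit with at worst a loss in the lower-semicontinuous direction, which is exactly the direction that preserves the inequality.

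Assembling the pieces: take $\liminf_{m\to\infty}$ of both sides of \eqref{E3.19}; the right-hand side is constant in $m$; on the left-hand side use weak lower semicontinuity for $\frac12\|u\|_{L^2}^2$ and $\|\nabla u\|_{L^2(0,T;L^2)}^2$, weak convergence (or Fatou) for $\int f(1+\frac12|\xi|^2)\,d\xi\,dx$, and the established strong-times-weak convergence of the drag term. This yields \eqref{E3.19--} and completes the lemma. I would remark that the only structural facts used are the uniform bounds \eqref{E4.1}--\eqref{E4.41-}, the strong convergence \eqref{strong convergence of velocity}, the weak convergences \eqref{E3.37}, \eqref{E4.52}--\eqref{E4.54}, and the convexity of $t\mapsto t^2$; no compactness beyond what is already in hand is needed, and the subsequent passage $\epi\to0$ in Step 2 will treat the right-hand side by the strong convergence of $u_0^\epi$ and $f_0^\epi$ imposed in the construction.
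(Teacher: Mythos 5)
Your proposal is correct and follows essentially the same route as the paper: both pass to the limit in the energy inequality \eqref{E3.19} of Proposition \ref{Pro3.1}, use weak lower semicontinuity for the $u$-quadratic terms, expand the drag term as $\int f_m|u_m|^2-2\int u_m\cdot\int_{\R^3}\xi f_m\,d\xi+\int m_2f_m$, and handle the cross term by pairing the strong convergence \eqref{strong convergence of velocity} with the weak$(*)$ convergence \eqref{E4.53}. The only (harmless) difference is that for $\int f_m|u_m|^2$ the paper applies Fatou's lemma to $|u|^2f\mathbf{1}_{|\xi|\leq l}$ and then the monotone convergence theorem to get a $\liminf$ bound, whereas you obtain genuine convergence by pairing $|u_m|^2\to|u|^2$ strongly in $L^\infty(0,T;L^2)$ against $\int_{\R^3}f_m\,d\xi$ weak$(*)$ in $L^\infty(0,T;L^2)$ from \eqref{E4.52} --- either suffices, since that term sits on the lower-semicontinuous side of the inequality.
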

\begin{proof}
Here,
 we only focus on the most challenging term \begin{equation}
 \label{most difficulty term}\int_0^T \int_{\mathbb{T}^3}\int_{\mathbb{R}^3}f_m|u_m-\xi|^2d\xi dx  ds,
 \end{equation}
when
  studying the limit of \begin{equation*}
\begin{split}
&\frac{1}{2}\int_{\mathbb{T}^3}|u_m|^2dx+\int_{\mathbb{T}^3}\int_{\mathbb{R}^3}f_m(1+\frac{1}{2}|\xi|^2)d\xi dx+\int_0^T \int_{\mathbb{T}^3}|\nabla_x u_m|^2dx ds\\
&+\int_0^T \int_{\mathbb{T}^3}\int_{\mathbb{R}^3}f_m|u_m-\xi|^2d\xi dx  ds\\
\leq & \frac{1}{2}\int_{\mathbb{T}^3}|u_0^\epi|^2dx+\int_{\mathbb{T}^3}\int_{\mathbb{R}^3}f_0^\epi(1+\frac{1}{2}|\xi|^2)d\xi dx,
\end{split}
\end{equation*}
as $m\to\infty.$
For that purpose,   as in \cite{Hamdache}, we rewrite \eqref{most difficulty term} as follows:
\beq
\begin{split}
\int_0^T \int_{\mathbb{T}^3}\int_{\mathbb{R}^3}f_m|u_m-\xi|^2d\xi dx  ds&=\int_0^T \int_{\mathbb{T}^3}\int_{\mathbb{R}^3}f_m |u_m|^2d\xi dxds\\
-& 2\int_0^T \int_{\mathbb{T}^3}u_m\cdot\int_{\mathbb{R}^3}f_m\xi d\xi dxds +\int_0^T \int_{\mathbb{T}^3} m_2f_mdxds.\\
\end{split}
\eeq
By \eqref{E3.37}$_1$($ f_m \rightharpoonup f,  weakly*\ in\  L^\infty(0,T;L^\infty(\T^3\times\R^3)$) and \eqref{strong convergence of velocity},  Fatou's Lemma yields
\beq\notag
 \int_0^T \int_{\mathbb{T}^3}\int_{\mathbb{R}^3}|u|^2 f \mathbf{1}_{|\xi|\leq l} dxd\xi ds\leq  \liminf\limits_{m\rightarrow \infty}\int_0^T \int_{\mathbb{T}^3}\int_{\mathbb{R}^3}f_m |u_m|^2d\xi dxds,
\eeq
where $l>0$ is any positive number.
Letting $l\to\infty$, we can apply the monotone convergence theorem to obtain
\begin{equation}
\label{11}
 \int_0^T \int_{\mathbb{T}^3}\int_{\mathbb{R}^3}|u|^2 f  dxd\xi ds\leq  \liminf\limits_{m\rightarrow \infty}\int_0^T \int_{\mathbb{T}^3}\int_{\mathbb{R}^3}f_m |u_m|^2d\xi dxds.
\end{equation}
Thanks to  \eqref{strong convergence of velocity} and \eqref{E4.53}, we have
\begin{equation}
\label{convergence}
\int_0^T \int_{\mathbb{T}^3}u_m\cdot \int_{\mathbb{R}^3}f_m\xi d\xi dxds \rightarrow \int_0^T \int_{\mathbb{T}^3}u\cdot\int_{\mathbb{R}^3}f\xi d\xi dxds,\  as\   m\rightarrow \infty.
\end{equation}
Note that by \eqref{E4.54},   Fatou's lemma yields
\begin{equation}
\label{22}
\int_0^T \int_{\mathbb{T}^3} m_2fdxds \leq \liminf\limits_{m\rightarrow \infty} \int_0^T \int_{\mathbb{T}^3} m_2f_mdxds.
\end{equation}
By \eqref{11}-\eqref{22}, we have
\beq
\int_0^T \int_{\mathbb{T}^3}\int_{\mathbb{R}^3}f|u-\xi|^2d\xi dx  ds \leq \liminf\limits_{m\rightarrow \infty} \int_0^T \int_{\mathbb{T}^3}\int_{\mathbb{R}^3}f_m|u_m-\xi|^2d\xi dx  ds,\  as\   m\rightarrow \infty.
\eeq
Thus, we are able to show \eqref{E3.19--}.
\end{proof}
\bigbreak

By \eqref{weak formulation for NS part}, \eqref{E4.38}, and Lemma \ref{lemma of energy inequality},  we have the following existence result of weak solutions for all time $t>0$:
\begin{proposition}\label{Pro3.2}
For any  $T>0$, under the assumption of Theorem \ref{Thm 1}, there exists a  weak solution $(u_\epi, f_\epi)$   of the following problem
\begin{equation}\label{E3.181}
\left\{\begin{array}{l}
\partial_t f_\epi
  +
  \xi\cdot \nabla_x f_\epi+{\rm div}_\xi((u_\epi-\xi)f_\epsilon)=-\lambda f_\epi+\lambda\int_{\mathbb{R}^3}T(\xi,\xi^\prime)f_\epi(t,x,\xi^\prime)d\xi^\prime,\\[2mm]
\partial_t u_\epi
 +(u_\epi\cdot \nabla_x)u_\epsilon +\nabla_x P_\epi- \Delta_x u_\epi=-\int_{\mathbb{R}^3}(u_\epi-\xi)f_\epsilon  d\xi,    \\[2mm]
 {\rm div}u_\epi=0,  \ \ \ \   \qquad x\in \mathbb{T}^3,\ \xi\in \mathbb{R}^3, \  t\in(0,T), \\[2mm]
 \end{array}
        \right.
\end{equation}
with initial data
\begin{equation}\label{E3.19----}
(f_\epi,u_\epi)|_{t=0}=(f_0^\epsilon(x,\xi),u_0^\epsilon(x)), \ \ \  x\in \mathbb{T}^3,\ \xi\in \mathbb{R}^3.
\end{equation}
Moreover, for any $0\leq t\le T$, the weak solution   $(u_\epi, f_\epi)$ satisfies the following energy inequality:
\begin{equation}\label{E3.19-------}
\begin{split}
&\frac{1}{2}\int_{\mathbb{T}^3}|u_\epi|^2dx+\int_{\mathbb{T}^3}\int_{\mathbb{R}^3}f_\epi(1+\frac{1}{2}|\xi|^2)d\xi dx+\int_0^T \int_{\mathbb{T}^3}|\nabla_x u_\epi|^2dx ds\\
&+\int_0^T \int_{\mathbb{T}^3}\int_{\mathbb{R}^3}f_\epi|u_\epi-\xi|^2d\xi dx  ds\\
\leq &\left(\frac{1}{2}\int_{\mathbb{T}^3}|u_0^\epi|^2dx+\int_{\mathbb{T}^3}\int_{\mathbb{R}^3}f_0^\epi(1+\frac{1}{2}|\xi|^2)d\xi dx\right),
\end{split}
\end{equation}
 and
\beq\label{E3.201}
\|f_\epi(t,x,\xi)\|_{ L^\infty(0,T;L^p(\R^3\times\T^3))}\leq C,\ for\ any\ 1\leq p\leq  \infty,
\eeq
here $C>0$ depends only on the initial data, $\lambda$ and $T$.
\end{proposition}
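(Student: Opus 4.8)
The plan is to obtain $(u_\epi,f_\epi)$ as the weak limit in $m$ of the approximate solutions $(u_m,f_m)$ furnished by Proposition \ref{Pro3.1}, keeping $\epi>0$ fixed throughout; all the analytic ingredients have in fact been assembled above, so the proof amounts to organizing them. First I would record that, by Proposition \ref{Pro3.1} together with Lemmas \ref{Lem2.3}--\ref{Lem2.4-1}, the bounds \eqref{E4.1}--\eqref{E4.41-} and \eqref{E3.28--} hold uniformly in $m$; these let me extract a subsequence (not relabeled) with $f_m\rightharpoonup f$ and $u_m\rightharpoonup u$ as in \eqref{E3.37}, together with the moment limits \eqref{E4.52}--\eqref{E4.54} and the breakup-operator limit \eqref{E4.43-}. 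The key compactness step is Lemma \ref{Lem4.4}: the bound \eqref{E4.29} on $\partial_t u_m$ combined with \eqref{E4.3} and the Aubin--Lions lemma upgrades the convergence of $u_m$ to the strong convergence \eqref{strong convergence of velocity} in $L^\infty(0,T;L^r(\mathbb{T}^3))$, $1<r\le 6$, hence (along a further subsequence) to a.e.\ convergence.

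Next I would pass to the limit in the weak formulation \eqref{E4.37} of the fluid equation: the time-derivative and viscous terms are linear in $u_m$ and pass by weak convergence, the convective term is handled by rewriting $(u_m\cdot\nabla)u_m\cdot\varphi=-(u_m\cdot\nabla)\varphi\cdot u_m$ and using $u_m\otimes u_m\to u\otimes u$ by the strong convergence of $u_m$, and the drag term is written $\int_{\mathbb{R}^3}f_m(u_m-\xi)\,d\xi=u_m\,m_0f_m-m_1f_m$, which passes because $u_m\to u$ strongly while $m_0f_m\rightharpoonup m_0f$ and $m_1f_m\rightharpoonup m_1f$ by \eqref{E4.52}--\eqref{E4.53}; this yields \eqref{weak formulation for NS part}. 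For the kinetic weak formulation \eqref{E4.38-}, the terms $f_m(\phi_t+\xi\cdot\nabla_x\phi)$ and $\lambda f_m\phi$ are linear in $f_m$ and pass by the weak-$*$ convergence in \eqref{E3.37}, the breakup term passes by \eqref{E4.43-}, and the coupling term $f_m(u_m-\xi)\cdot\nabla_\xi\phi$ is split as $u_mf_m\cdot\nabla_\xi\phi-f_m\,\xi\cdot\nabla_\xi\phi$; the second piece is linear in $f_m$, and for the first piece I would write $\int_{\mathbb{T}^3}u_m\cdot\big(\int_{\mathbb{R}^3}f_m\nabla_\xi\phi\,d\xi\big)\,dx$, pair the strong convergence of $u_m$ with the weak-$*$ convergence of the inner $\xi$-integral, and control the large-velocity tails by the uniform bound $M_3f_m\le C$ from \eqref{E4.39--} after inserting a cutoff $\mathbf{1}_{\{|\xi|\le R\}}$ and letting $R\to\infty$; this gives \eqref{E4.38}.

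For the energy inequality I would simply invoke Lemma \ref{lemma of energy inequality}: the only genuinely nonlinear term on the left, $\int_0^T\int_{\mathbb{T}^3}\int_{\mathbb{R}^3}f_m|u_m-\xi|^2\,d\xi\,dx\,ds$, is treated there by expanding $|u_m-\xi|^2=|u_m|^2-2u_m\cdot\xi+|\xi|^2$ and combining the strong convergence of $u_m$, the moment convergences \eqref{E4.53}--\eqref{E4.54}, and Fatou / monotone convergence; the remaining left-hand terms are norms, lower semicontinuous under the weak convergences \eqref{E3.37}, and $M_3f_m\le C$ passes to the limit by Fatou. This yields \eqref{E3.19-------}. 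Nonnegativity $f_\epi\ge 0$ is inherited from $f_m\ge 0$ by weak-$*$ convergence, $\Dv u_\epi=0$ from $\Dv u_m=0$, the bound \eqref{E3.201} from \eqref{E4.1} by lower semicontinuity of $L^p$ norms, and the initial data \eqref{E3.19----} are attained in the usual weak sense (test functions not vanishing at $t=0$). Renaming $(u,f)=:(u_\epi,f_\epi)$ completes the proof.

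The main obstacle is the passage to the limit in the two coupling terms $\int_{\mathbb{R}^3}f_m(u_m-\xi)\,d\xi$ and $\int_{\mathbb{R}^3}f_m(u_m-\xi)\cdot\nabla_\xi\phi\,d\xi$, where $u_m$ depends only on $(t,x)$ while $f_m$ depends on $(t,x,\xi)$: one must marry the strong $L^r$ convergence of $u_m$ with the weak convergence of the moments $m_0f_m$, $m_1f_m$ and, crucially, use the uniform third-moment bound to discard the contribution of large $|\xi|$ via the cutoff-and-$R\to\infty$ argument above. Everything else reduces to routine strong/weak convergence.
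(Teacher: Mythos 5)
Your proposal is correct and follows essentially the same route as the paper: the uniform-in-$m$ bounds from Proposition \ref{Pro3.1}, the $\partial_t u_m$ estimate of Lemma \ref{Lem4.4} plus Aubin--Lions for strong convergence of $u_m$, passage to the limit in the two weak formulations via the moment convergences \eqref{E4.52}--\eqref{E4.54} and \eqref{E4.43-}, and Lemma \ref{lemma of energy inequality} for the energy inequality. The extra detail you supply on the coupling terms (the splitting $u_m m_0 f_m - m_1 f_m$ and the velocity cutoff controlled by $M_3 f_m$) is consistent with, and slightly more explicit than, what the paper writes.
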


\bigbreak

\textbf{Step 2. Passing to the limit as $\epsilon \rightarrow 0$.}\\
Next, we aim to pass to the limit to  recover the weak solution to the problem \eqref{E1.1}-\eqref{E1.2} as $\epsilon\rightarrow 0.$
\begin{remark}
\label{remark on last level}
Although we have fixed $\epsilon>0$ in Step 1, estimates \eqref{E4.1}-\eqref{E4.41-} are uniform  with respect to $\epsilon$. Thus, we have the same convergence results for   $(u_{\epsilon},f_{\epsilon})$ to the same  $(u_m,f_m)$.
\end{remark}

To recover the weak solutions of  \eqref{E1.1}-\eqref{E1.2}, we only need to pass to the limit for  $(u_{\epsilon},f_{\epsilon})$ as $\epsilon\rightarrow 0.$ Thanks to Remark \ref{remark on last level}, we can take the same limit process as $m\to \infty$ to handle the limit  in the weak formulation. It allows us to deduce \eqref{E1.8}, \eqref{E1.9} by letting $\epsilon$ go  to zero.

Note that by the restriction on $u_0^\epi$ and $f_0^\epi$ at the beginning of Section 3, we have
$$\frac{1}{2}\int_{\mathbb{T}^3}|u_0^\epi|^2dx+\int_{\mathbb{T}^3}\int_{\mathbb{R}^3}f_0^\epi(1+\frac{1}{2}|\xi|^2)d\xi dx
\to \frac{1}{2}\int_{\mathbb{T}^3}|u_0|^2dx+\int_{\mathbb{T}^3}\int_{\mathbb{R}^3}f_0(1+\frac{1}{2}|\xi|^2)d\xi dx
$$
as $\epsilon\to 0.$ With the help of the convergence of $(u_{\epsilon},f_{\epsilon})$, this allows us to deduce, as $\epsilon\to 0$,
\begin{equation}\label{E4.36}
\begin{split}
&\frac{1}{2}\int_{\mathbb{T}^3}|u|^2dx+\int_{\mathbb{T}^3}\int_{\mathbb{R}^3}f(1+\frac{1}{2}|\xi|^2)d\xi dx+\int_0^T \int_{\mathbb{T}^3}|\nabla_x u|^2dx ds\\
&+\int_0^T \int_{\mathbb{T}^3}\int_{\mathbb{R}^3}f|u-\xi|^2d\xi dx  ds\\
\leq & \frac{1}{2}\int_{\mathbb{T}^3}|u_0|^2dx+\int_{\mathbb{T}^3}\int_{\mathbb{R}^3}f_0(1+\frac{1}{2}|\xi|^2)d\xi dx.
\end{split}
\end{equation}
Thus, we have completed the proof of our main result.

\section*{Acknowledgement}
 Yao   was supported by the National
Natural Science Foundation of China $\#$11571280, 11331005, FANEDD $\#$201315  and China Scholarship Council.

\bibliographystyle{plain}

\begin{thebibliography}{99}


\bibitem{Ano}
O. Anoschchenko, E. Khruslov, H. Stephan, Global weak solutions to the Navier-Stokes-Vlasov-Poisson system, J. Math. Phys. Anal.
Geom., 6(2010), 143-182.


\bibitem{Benjelloun}
S. Benjelloun, L. Desvillettes, A.  Moussa,   Existence theory for the kinetic-fluid coupling when small droplets are treated as part of the fluid, J. Hyperbolic Differ. Equ., 11(2014), 109-133.



\bibitem{Boudin}
 L. Boudin, L.  Desvillettes,  C.  Grandmont, A.  Moussa,   Global existence of solutions for the coupled Vlasov and Navier-Stokes equations, Differential Integral Equations, 22(2009),  1247-1271.

\bibitem{Caflisch}
R. Caflisch, G.C. Papanicolaou, Dynamic theory of suspensions with Brownian effects, SIAM J. Appl. Math., 43(1983), 885-906.


%\bibitem{Diperna}R.J. Diperna, P.-L. Lions, Ordinary differential equations, transport theory and Sobolev spaces, Invent. Math., 98(1989), 511-547.


\bibitem{Goudon1}
T. Goudon, L. He, A. Moussa, P. Zhang, The Navier-Stokes-Vlasov-Fokker-Planck system near equilibrium, SIAM J. Math. Anal., 42(2010), 2177-2202.

\bibitem{Goudon2}
T. Goudon, P.-E. Jabin, A.F. Vasseur, Hydrodynamic limit for the Vlasov-Navier-Stokes equations. I. Light particles regime, Indiana Univ.
Math. J., 53(2004), 1495-1515.

\bibitem{Goudon3}
 T. Goudon, P.-E. Jabin, A.F.  Vasseur, Hydrodynamic limit for the Vlasov-Navier-Stokes equations. II. Fine particles regime, Indiana Univ.
Math. J., 53(2004), 1517-1536.


\bibitem{Hamdache}
K. Hamdache,   Global existence and large time behaviour of solutions for the Vlasov-Stokes equations, Japan J. Indust. Appl. Math., 15(1998),  51-74.




\bibitem{Legar}
 N. Leger, A.F. Vasseur,  Study of a generalized fragmentation model for sprays, J. Hyperbolic Differ. Equ., 6(2009), 185-206.

%\bibitem{Le} J. Leray,   Sur le mouvement d'un liquide visqueux emplissant l'espace,  (French) Acta Math., 63(1934),  193-248.




 %\bibitem{Lion} P.-L. Lions,Mathematical topics in fluid mechanics. Vol. 1. Incompressible models, Oxford Lecture Series in Mathematics and its Applications, 3. Oxford Science Publications, The Clarendon Press, Oxford University Press, New York, 1996.

\bibitem{Mellet1}
A. Mellet,  A.F.  Vasseur,   Global weak solutions for a Vlasov-Fokker-Planck/Navier-Stokes system of equations,  Math. Models Methods Appl. Sci., 17(2007), 1039-1063.

\bibitem{Mellet2}
 A. Mellet,  A.F.  Vasseur,  Asymptotic analysis for a Vlasov-Fokker-Planck/compressible Navier-Stokes system of equations,  Comm. Math. Phys., 281(2008), 573-596.







\bibitem{Yu2}
 D. Wang, C. Yu, Global weak solution to the inhomogeneous Navier-Stokes-Vlasov equations,  J. Differential Equations, 259(2015), 3976-4008.



\bibitem{Yu1}
C. Yu,   Global weak solutions to the incompressible Navier-Stokes-Vlasov equations, J. Math. Pures Appl.(9),  100(2013), 275-293.



\bibitem{Yu}
C. Yu, The weak solution to a Boltzmann type equation and its energy conservation,  arXiv:1603.06932.


\bibitem{Yao}
L. Yao, Global existence of weak solutions to  the inhomogeneous Navier-Stokes-Vlasov-Boltzmman equations, In preparation, 2016.








\end{thebibliography}

\end{document}